\newtheorem{thm}{Theorem}[section]
\newtheorem{lem}[thm]{Lemma}
\newtheorem{hypothesis}[thm]{Hypothesis}
\newtheorem{prop}[thm]{Proposition}
\newtheorem{cor}[thm]{Corollary}
\newtheorem{defn}[thm]{Definition}
\newtheorem{rmk}[thm]{Remark}
\newtheorem{rmks}[thm]{Remarks}
\newtheorem{ques}[thm]{Question}
\newtheorem{ex}[thm]{Example}
\def\cd2{codimension 2 smoothable}
\def\CD2{CD2}
\def\O{{\mathcal O}}
\def\Q{{\mathcal Q}}
\def\C{{\mathcal C}}
\def\L{{\mathcal L}}
\def\P{{\mathbb P}}
\def\I{{\mathcal I}}
\def\A{{\mathcal A}}
\def\F{{\mathcal F}}
\def\E{{\mathcal E}}
\def\G{{\mathcal G}}
\def\K{{\mathcal K}}
\def\N{{\mathcal N}}
\def\Z{{\mathbb Z}}
\def\B{{\mathcal B}}
\def\Pthree{{\mathbb P}^3}
\def\Pn{{\mathbb P}^n}
\def\Hom{\mathop{\mathcal Hom}}
\def\Ext{\mathscr{E}\text{\kern - 3pt {\calligra\large xt}}\,} 
\def\Spec{\mathop{\rm Spec}}
\def\rk{\mathop{\rm rank}}
\def\codim{\mathop {\rm codim}}
\def\supp{\mathop {\rm Supp}}
\def\mod{\mathop{\rm mod}}
\def\m{\mathop{\rm m}}
\def\supp{\mathop{\rm Supp}}
\def\sing{\mathop{\rm Sing}}
\def\rank{\mathop{\rm rank}}
\def\deg{\mathop{\rm deg}}
\def\cok{\mathop{\rm Coker}}
\def\ker{\mathop{\rm Ker}}
\def\k{\mathop{\rm k}}
\title{Smoothing surfaces on fourfolds}
\author{Scott Nollet}
\address{Department of Mathematics, Texas Christian University, Fort Worth, TX 76129}
\email{s.nollet@tcu.edu}
\author{A. P. Rao}
\address{Department of Mathematics, University of Missouri, Saint Louis, MO 63121}
\email{raoa@umsl.edu}
\subjclass[2000]{14F06, 14J17, 14J35, 14M06, 14M07}
\begin{document}
\bibliographystyle{plain}

\begin{abstract}
If $\E, \F$ are vector bundles of ranks $r-1,r$ on a smooth fourfold $X$ and $\Hom(\E,\F)$ is 
globally generated, it is well known that the general map $\phi: \E \to \F$ is injective and drops rank along a smooth surface. 
Chang improved on this with a filtered Bertini theorem. 
We strengthen these results by proving variants in which (a) $\F$ is not a vector bundle 
and (b) $\Hom(\E,\F)$ is not globally generated.
As an application, we give examples of even linkage classes of surfaces on $\mathbb P^4$ in which 
all integral surfaces are smoothable, including the linkage classes associated with the Horrocks-Mumford surface. 
\end{abstract}

\maketitle

\section{Introduction}
Smoothing results are useful in algebraic geometry, as seen in the many applications of the Bertini theorems \cite{jouanolou}. 
A classical theorem says that if $\E, \F$ are vector bundles of ranks $r-1, r$ on a smooth variety $X$ and $\Hom(\E, \F)$ globally generated, then the general map $\phi: \E \to F$ is injective and if not  locally split, drops rank 
along a codimension $2$ subvariety $Y \subset X$ which is smooth away from a set of codimension $\geq 4$ in $Y$ \cite{kleiman}. 
Chang substantially refined this result with her filtered Bertini theorem \cite{C}. 
To state it, suppose that $0 = \E_0 \subset \E_1 \subset \dots \E_n = \E$ and 
$0 = \F_0 \subset \F_1 \subset \dots \F_n = \F$ are filtrations by subbundles and define 
\begin{equation}\label{FBdata}
\left\{\begin{array}{l} \alpha_i = \rk \F_i - \rk \E_i \text{ for } i < n \\
\B = \{\phi \in \Hom(\E, \F): \phi (\E_i) \subset \F_i\} \subset \Hom (\E,\F).
\end{array}\right.
\end{equation}

\begin{thm}\label{FB} If $\B$ is globally generated, then the general map 
$\phi: \E\ \to \F$ drops rank along $Y$ of codimension two (if non-empty) and 
$\codim_Y \sing Y \geq \min\{2 \alpha_i-1, \alpha_i+2,4\}$. 
\end{thm}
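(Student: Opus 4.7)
I would adapt the classical paradigm for Bertini theorems on degeneracy loci (as in Kleiman's argument), but keep track of the filtration at every stage. Let $V \subset H^0(X,\B)$ be a finite-dimensional subspace that still globally generates $\B$, and let $\Phi\colon \pi_X^*\E \to \pi_X^*\F$ denote the tautological filtration-preserving map over $V \times X$, with universal degeneracy locus $\tilde Y \subset V \times X$ and fiber $Y = Y_v$ over $v \in V$. Because $\B$ is globally generated, the evaluation map $V \otimes \O_X \to \B$ is surjective on stalks, providing the freedom needed for a Kleiman-type transversality argument downstream.

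Next, I would stratify $\tilde Y$ by the filtered rank-drop data: at a point $(v,x)$ record the tuple $(d_1,\dots,d_n)$ with $d_i := \rk\E_i - \rk(\phi_v|_{\E_i})(x)$. The overall degeneracy imposes $d_n \ge 1$, and the singular locus of $Y_v$ lies in the union of strata where some extra rank condition holds. A Porteous-type count assigns to each stratum an expected codimension in $V \times X$ expressed in the $d_i$ and $\alpha_i$. The three candidate strata producing the stated minimum are: the filtration-free rank-drop-by-two stratum, contributing the classical bound $4$; a stratum where $\phi_v|_{\E_i}$ drops rank and the drop also shows on the quotient $\E/\E_i \to \F/\F_i$, contributing $\alpha_i+2$ in $Y$; and a stratum where the drop is absorbed entirely inside the $i$-th piece, contributing $2\alpha_i-1$ in $Y$.

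Third, I would establish transversality: show that each universal stratum realizes its expected codimension in $V \times X$. For this I would evaluate $V$ (and, where needed, a jet version $V \to \B \otimes \O_X/\m_x^2$) and verify surjectivity onto the normal directions of the appropriate Schubert-type stratum in the fiber of $\B$. The filtration condition is automatically preserved because we work inside $\B$ from the outset. Once each stratum in $\tilde Y$ has the expected codimension, projecting to $V$ and applying generic smoothness gives, for $v$ outside a proper closed set, the corresponding codimension bound inside $Y_v$. Taking the minimum over $i$ yields $\codim_{Y_v}\sing Y_v \ge \min\{2\alpha_i-1,\, \alpha_i+2,\, 4\}$.

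The main obstacle is the transversality step in the presence of the filtration constraint. The classical Kleiman argument exploits the full linear freedom in $\Hom(\E,\F)$; here we only move within the subbundle $\B$, so one has to verify that $V \otimes \O_X \to \B$ (and its jet enhancement) still surjects onto the normal bundle of each filtered Schubert stratum. This is precisely where global generation of $\B$ (rather than of $\Hom(\E,\F)$) is used essentially, and where the three-term minimum is forced upon us rather than the uniform bound $4$.
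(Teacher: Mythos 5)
A preliminary remark: the paper does not prove Theorem \ref{FB} at all --- it is Chang's filtered Bertini theorem, quoted from \cite{C} and used as an input --- so the only meaningful comparison is with Chang's argument. Your architecture is the same as the standard (Kleiman--Chang) one: a universal filtration-preserving map over $V\times X$, a stratification by filtered rank-drop data, fiberwise codimension counts made global by surjectivity of $V\otimes\O_X\to\B$, then a dimension count on the incidence variety and generic smoothness. So the route is the right one; the problem is that what you have written is a plan in which the step that actually constitutes the theorem is asserted rather than carried out.

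The content of the statement is the specific three-term bound, and that comes from enumerating the degeneracy strata available inside the fibers of $\B$ (block upper-triangular maps $\phi=\left[\begin{smallmatrix} A & B \\ 0 & C\end{smallmatrix}\right]$ with $A\colon\E_i\to\F_i$, $C\colon\E/\E_i\to\F/\F_i$) and computing their codimensions there, not in $\Hom(\E,\F)$. You never perform this count, and the attributions you do give are numerically off. For instance, the stratum where $A$ drops rank by one \emph{and} $C$ drops rank by one has expected codimension $(\alpha_i+1)+\alpha_i=2\alpha_i+1$ in $X$ (the blocks $A$ and $C$ are independent coordinates on $\B$), i.e.\ $2\alpha_i-1$ in $Y$ --- this is the term you instead ascribe to the drop being ``absorbed entirely inside the $i$-th piece.'' The latter stratum, where only $A$ degenerates, has codimension $\alpha_i+1$ in $X$, hence only $\alpha_i-1$ in $Y$, which is \emph{below} the claimed bound in the relevant range (e.g.\ $\alpha_i=2$); so it cannot ``contribute $2\alpha_i-1$'' --- rather, one must prove that $Y$ is generically smooth along it and that its singularities along it occur in higher codimension, and this local determinantal analysis is exactly where the term $\alpha_i+2$ is extracted. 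That analysis, together with the verification that $\sing Y$ is contained in the union of the listed strata (equivalently, smoothness of the universal degeneracy locus away from them, so that generic smoothness can be invoked), is the heart of Chang's proof; your outline flags it as ``the main obstacle'' and then leaves it unaddressed, while global generation of $\B$ by itself only gives you the easy half, namely that each stratum is hit in its expected codimension by the general $\phi$. As it stands, the proposal would not produce the bound $\min\{2\alpha_i-1,\alpha_i+2,4\}$; it presupposes it.
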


The lower bound in Theorem \ref{FB} is the expected codimension. 
When $\dim X \leq 4$, this says that $Y$ is smooth if $\alpha_i \geq 2$ for each $i < n$ 
(when $\dim X = 5$, we need $\alpha_i \geq 3$). 
Motivated by the liaison theory of the Horrocks-Mumford bundle \cite{HM}, we aim to extend 
Theorem \ref{FB} to situations where $\dim X \leq 4$ and 
(a) $\F$ is not a vector bundle or (b) $\B$ is not globally generated. 

We use Fitting schemes to classify rank $r$ sheaves $\F$ on a smooth variety $X$ for which there are locally non-split 
maps $\O^{r-1} \to \F$ dropping rank along a 
smooth subvariety of codimension two (Proposition \ref{classify}), 
calling the resulting sheaves \textit{codimension $2$ smoothable} (\CD2 for short). 
These generalize the curvilinear sheaves on $\mathbb P^3$ introduced by Hartshorne and Hirschowitz \cite{HH}. 
Let $\F$ be a rank $r$ \CD2 reflexive sheaf whose singular scheme $\sing \F$ has integral curve components and $\E$ be a rank $r-1$ vector bundle. 
Suppose $\E$ has a locally split filtration $\E_i$ by subbundles, and $\F$ has a locally split filtration $\F_i$ by \CD2 reflexive sheaves.  
Define $\B$ and $\alpha_i$ as in (\ref{FBdata}). 

\begin{thm}\label{main1}
Suppose $\dim X = 3$ or $4$.
If $\B$ is globally generated and $\alpha_i \geq 2$ for $i < n$, then $\phi: \E \to \F$ drops rank along 
smooth $Y \subset X$ of codimension $2$ for general $\phi$, if $Y \neq \emptyset$.
\end{thm}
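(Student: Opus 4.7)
The plan is to verify smoothness of the degeneracy locus $Y = Y_\phi$ locally on $X$, separating the analysis into two regimes: the open complement $U = X \setminus \sing \F$, where $\F$ is a vector bundle, and the $1$-dimensional singular scheme $\sing \F$ itself. On $U$, the restrictions $\E_i|_U$ and $\F_i|_U$ are locally split filtrations by genuine subbundles, the restricted $\B|_U$ remains globally generated, and we have $\alpha_i \geq 2$ for $i < n$ with $\dim X \leq 4$; so Chang's filtered Bertini Theorem \ref{FB} applies verbatim and shows that for general $\phi \in \B$, the intersection $Y \cap U$ is smooth of codimension two.

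The substantive new content is therefore the analysis at points $p \in \sing \F$. Because the filtration $\F_i$ is locally split and each subquotient $\F_i/\F_{i-1}$ is CD2, there is a local decomposition $\F \cong \bigoplus_i \F_i/\F_{i-1}$, and $\sing \F$ is the union of the $\sing(\F_i/\F_{i-1})$, each of which consists of integral curves by hypothesis. At a general point $p$ of a curve component $C$ of $\sing \F$, say with $p \in \sing(\F_k/\F_{k-1})$, Proposition \ref{classify} supplies a local map $\O^{\rk(\F_k/\F_{k-1}) - 1} \to \F_k/\F_{k-1}$ whose degeneracy locus is smooth of codimension two through $p$. My goal is to show that a general $\phi \in \B$ already realizes this CD2-smoothing behaviour near every such $p$.

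I would formalize this via the standard incidence correspondence
\[
I = \{ (\phi, p) \in \B \times \sing \F : Y_\phi \text{ is singular at } p \},
\]
and bound $\dim I < \dim \B$ by controlling the fiber of $I \to \sing \F$ over each $p$. Identifying the bad fiber in $\B$ at $p$ as the kernel of an evaluation-type map into a tangent space dictated by a Fitting-ideal smoothness criterion for the cokernel of $\phi$, global generation of $\B$ together with the CD2 local model from Proposition \ref{classify} forces this evaluation to surject onto a target of dimension at least $2$; so the bad fiber has codimension at least $2$ in $\B$. Since $\sing \F$ is $1$-dimensional, this yields $\dim I < \dim \B$, so $I \to \B$ is not dominant and the general $\phi$ produces a smooth $Y$ globally on $X$.

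The main obstacle will be executing the third step honestly: at points of $\sing \F$ the sheaf $\F$ is not locally free, so the classical Jacobian/Kleiman-Bertini framework does not apply, and one must replace it with a Fitting-ideal smoothness criterion and verify that the local CD2 structure from Proposition \ref{classify} is compatible with the filtration data. The local splittings of $\E_i$ and $\F_i$ should reduce the joint analysis at $p$ to a direct sum of independent CD2 situations, one per index $k$ with $p \in \sing(\F_k/\F_{k-1})$; but checking that $\B$ — rather than the full $\Hom(\E,\F)$ — is rich enough to deform each summand in the required CD2-smoothing direction is the most delicate point, and is precisely where the condition $\alpha_i \geq 2$ enters.
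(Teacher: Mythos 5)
Your two-regime skeleton (Chang's Theorem \ref{FB} on $U=X-\sing\F$, plus a dimension count over the at most one-dimensional $\sing\F$ using the local model of Proposition \ref{classify}) is the right general shape, but the step carrying all the content is asserted rather than proved, and the justification you offer for it is not valid. At a point $p\in\sing\F$ the evaluation of $\B$ lands only in the subspace of $\hom(\E(p),\F(p))$ consisting of filtration-compatible (block-triangular, with respect to the local splittings) homomorphisms; global generation of $\B$ gives surjectivity onto that subspace only, not onto the full fiber. Hence your claim that ``global generation of $\B$ together with the CD2 local model forces this evaluation to surject onto a target of dimension at least $2$, so the bad fiber has codimension at least $2$ in $\B$'' is precisely a codimension estimate for degeneracy loci inside a space of filtration-preserving matrices --- the kind of estimate that makes Theorem \ref{FB} nontrivial and that genuinely depends on the $\alpha_i$, not on global generation alone. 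You acknowledge at the end that this is where $\alpha_i\geq 2$ must enter, but no argument is given; nor do you supply the Fitting-ideal smoothness criterion at $p$ that the count is supposed to feed into (in the unfiltered case this is the explicit matrix analysis of Theorem \ref{rank2}, with separate counts at smooth points of the curve components of $\sing\F$ and at the finitely many singular or isolated points). As written, the proof of the theorem is therefore incomplete at exactly the point where it goes beyond the classical case.

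For comparison, the paper never performs a filtered local analysis along $\sing\F$ at all. It uses the splitting $\B\cong\oplus_i(\C_i^\vee\otimes\F_i)$ of Remark \ref{filtration} to induct on the length $n$ of the filtration: the first map $\phi_1:\E_1\to\F_1$ is an unfiltered, globally generated problem, and $\alpha_1\geq 2$ guarantees via Theorem \ref{rank2}(b) and Corollary \ref{avoidance} that $\cok\phi_1$ is again a \CD2{} reflexive sheaf with integral singular curves, locally free along the singular loci of the later quotients $\Q_k$; passing to $\overline\F_i=\F_i/\E_1$, $\overline\E_i=\E_i/\E_1$ reduces $n$ by one, and the final corank-one step is Theorem \ref{rank2}(a) (with Remark \ref{extensions} covering $\dim X=3$, and projectivity used to pass from one good $\phi$ to the general one). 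If you want to keep your one-shot incidence-correspondence argument, you would have to prove block-triangular analogues of the codimension counts in Theorem \ref{rank2}, i.e.\ bound the codimension, inside the space of filtration-preserving matrices, of the loci where the relevant submatrices drop rank, with $\alpha_i\geq 2$ entering those bounds; that amounts to redoing a Chang-type computation over $\sing\F$, and the paper's induction is exactly the device that avoids it.
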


When $n=1$ and $X = \mathbb P^3$ we recover \cite[Theorem 3.2]{HH}. 

Our second result gives a variant of Theorem \ref{FB} when $\B$ is not globally generated. 
It is harder to make an abstract statement, so we take $X = \mathbb P^d$ with $d \leq 4$, 
$\E = \oplus \O (-a_i)$ and $\F = \oplus \O (b_j) \oplus \G$, where $\G$ is a rank $r$ vector bundle having a space of 
sections $V \subset H^0 (\G)$ for which the evaluation map $V \otimes \O_X \to \G$ has cokernel $\Q$ which 
is generically a line bundle on a smooth curve. 
Assuming $H^0(\G(-1))=0$, we define a \em{canonical filtration }\em $\E_i, \F_i$ on $\E$ and $\F$ based on \cite[Example 2.1]{C}. 
The corresponding sheaf $\B$ in (\ref{FBdata}) need not be globally generated, but even so we obtain smoothing: 

\begin{thm}\label{main2}
Suppose $X = \mathbb P^d$ with $d = 3$ or $4$. 
If $\alpha_i \geq 2$ for $i < n$, then $\phi: \E \to \F$ drops rank along a 
smooth subvariety $Y \subset X$ of codimension $2$ for general $\phi$, if $Y \neq \emptyset$. 
\end{thm}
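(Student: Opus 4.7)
The plan is to adapt the proof of Theorem \ref{main1}, compensating for the failure of global generation of $\B$, which is localized along the curve $C = \supp \Q$. Following Chang \cite[Example 2.1]{C}, the canonical filtration orders the summands of $\E$ and the bundle summands of $\F$ by decreasing ``degree''; the hypothesis $H^0(\G(-1))=0$ fixes $\G$ in the unique slot between the strictly positive and strictly negative parts. With this filtration the successive graded pieces $\hom(\E_i/\E_{i-1},\F_i/\F_{i-1})$ are all globally generated twists of $\O_{\mathbb P^d}$, except for the single block $\hom(\O,\G) = \G$, where only $V \subset H^0(\G)$ produces sections and global generation of $\G$ fails along $C$.

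The argument proceeds in two steps. On the open set $U = X \setminus C$ the evaluation $V \otimes \O \to \G$ is surjective, so $\B|_U$ is globally generated, and the filtered Bertini argument underlying Theorem \ref{main1} applies verbatim there to show that, for general $\phi$, the locus $Y \cap U$ is empty or smooth of pure codimension $2$ in $U$. To handle $Y$ along $C$, I would use the hypothesis that $\Q$ is generically a line bundle on the smooth curve $C$: this places $\G$, locally near a smooth point $p \in C$, in exactly the local form classified by Proposition \ref{classify}, exhibiting $\G$ locally as a \CD2 reflexive sheaf with singular scheme contained in $C$ and $V \otimes \O \to \G$ as a locally non-split map witnessing this. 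One then replaces the summand $\G$ of $\F$ by an auxiliary \CD2 reflexive sheaf $\widetilde \G$ agreeing with $\G$ off $C$, producing a filtered pair $(\E,\widetilde \F)$ whose associated $\widetilde \B$ is globally generated and whose filtered maps map naturally into $\B$ via $\widetilde \F \hookrightarrow \F$. Theorem \ref{main1} applied to $(\E,\widetilde \F)$ then gives a smooth codimension-$2$ degeneracy locus, which coincides with $Y$ by a Fitting-scheme matching along $C$.

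The main obstacle is constructing $\widetilde \G$ globally on $X$ and verifying that the Fitting schemes of a general filtered map $\widetilde \phi : \E \to \widetilde \F$ and of the corresponding $\phi : \E \to \F$ agree along $C$. The smoothness of $C$ should allow the local constructions afforded by Proposition \ref{classify} to patch into a coherent sheaf on $X$, and the generic-line-bundle condition on $\Q$ should force the local Fitting computations to take the required form; once these points are verified, Theorem \ref{main1} delivers the conclusion.
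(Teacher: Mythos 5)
Your reduction has a genuine gap at the crucial point, namely along the curve $C=\supp\Q$. First, the appeal to Proposition \ref{classify} is off target: in the situation of Theorem \ref{main2} the sheaf $\G$ is locally \emph{free} along $C$ (and Hypothesis \ref{setup} even demands $C\cap\sing\G=\emptyset$); what fails there is generation by $V$, not local freeness. The object that is actually singular along $C$ is the image of the evaluation map $V\otimes\O\to\G$, which near a point $p\in C$ has the form $\O_p\oplus \I_{C,p}$ with $C$ of codimension $3$; this stalk needs $r+2$ local generators, so by Proposition \ref{classify} it is not CD2, and since $\I_{C,p}^{\vee\vee}=\O_p$ it is not reflexive either. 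Second, and more decisively, the auxiliary sheaf you want cannot exist: if $\widetilde\G\subseteq\G$ is reflexive and agrees with $\G$ off $C$, then because $C$ has codimension $\geq 2$ the inclusion induces $\widetilde\G=\widetilde\G^{\vee\vee}\cong\G^{\vee\vee}=\G$, so the only CD2 \emph{reflexive} surrogate is $\G$ itself, which is exactly the sheaf that $V$ fails to generate along $C$. Dropping reflexivity does not help, since Theorem \ref{main1} requires $\F$ to be CD2 reflexive with integral curve components of its singular scheme. So there is no reduction of Theorem \ref{main2} to the globally generated filtered statement; moreover the degeneracy locus of a general $\phi$ really does meet $C$ (as in the Horrocks--Mumford case), so its smoothness there cannot be inferred from behavior on $X\setminus C$.

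The paper instead analyzes the locus $C$ directly, and this is the content your proposal is missing. Lemma \ref{two} shows by a Schubert-variety dimension count that a general subspace $W\subset V$ of dimension $k<r-1$ gives a map $W\otimes\O\to\G$ that is fiberwise injective along $C$, so the quotient is locally free near $C$ and again satisfies Hypothesis \ref{setup}; Lemma \ref{one} treats the residual rank-two case by writing the evaluation map in the explicit Horrocks--Mumford local form along $C$, checking that the universal zero scheme $Z\subset V\times U$ is smooth at enough points so that $\pi_1(\sing Z)$ is proper in $V$, and concluding by generic smoothness; Corollary \ref{P4} then handles $\E$ a sum of line bundles by induction and twisting, the key bookkeeping being that the quotient retains a space of sections whose evaluation cokernel is still $\Q$. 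Finally Theorem \ref{notgg} runs the induction on the canonical filtration, using Corollary \ref{avoidance} for the stages not involving $\G$ (this matches your step on $X\setminus C$) and Corollary \ref{P4} once $\G$ enters, again propagating a good section space to each quotient $\overline\F_k$. To repair your argument you would need to replace the $\widetilde\G$ construction by this kind of direct incidence-variety and section-space analysis along $C$.
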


When $\E = \O^{r-1}$ and $\F = \G$, the general map $\phi: \E \to \F$ 
drops rank along a smooth subvariety of codimension two, recovering the fact that a general section of the Horrocks-Mumford 
bundle vanishes along a smooth surface \cite[Theorem 5.1]{H}. 
Our Theorem \ref{notgg} proves this more generally when $\G$ is \CD2 reflexive, but state it here for $\G$ a vector bundle to make the statement cleaner. 

\subsection{Applications to linkage theory}
Linkage theory \cite{JM,PS} treats general locally Cohen-Macaulay subschemes of $\mathbb P^d$ of codimension $2$, but one is often interested in which subschemes $Z \subset \mathbb P^d$ can be deformed to a smooth variety within its even linkage class $\L$. 
By \cite{R2}, there is a vector bundle $\N_0$ with $H^1_* (\N_0^\vee)=0$ corresponding to $\L$ for which each $Y \in \L$ has a resolution of the form  
\[
\begin{array}{ccccccccc}
0 & \to & \oplus \O (-a_i) & \stackrel{\phi}{\to} & \oplus \O (-b_j) \oplus \N_0 & \to & \I_Y (t) & \to & 0 \\
& & || & & || & & & & \\
& & \E & \stackrel{\phi}{\to} & \F & & & & 
\end{array}
\]
with $t \in \Z$, so smoothing becomes a question of whether a general map $\phi$ drops rank along a smooth subvariety of codimension $2$. 
Chang \cite{C,C2,C3} applied Theorem \ref{FB} to these resolutions to classify smooth arithmetically Buchsbaum codimension $2$ subvarieties in $\mathbb P^d$ 
for $d \leq 5$: none exist for $d \geq 6$, as predicted by Hartshorne's conjecture \cite{H1}. 
Building on work of Sauer \cite{sauer}, Steffen used Theorem \ref{FB} to classify codimension $2$ smooth connected ACM subvarieties in $\mathbb P^n$ \cite{steffen}. 
An interesting feature of these examples is that 
every integral curve in ACM or arithmetically Buchsbaum linkage classes on $\mathbb P^3$ is 
smoothable \cite{C,GP1,PR}. 
The same holds for ACM or arithmetically Buchsbaum linkage classes of surfaces on $\mathbb P^4$ \cite{C,Nsecant}. 
This makes it easy to write down the deformation classes having a smooth variety because there is a numerical criterion for integrality in these classes \cite{Nsecant,Nint}. 

Our work here is motivated by the linkage theory of the Horrocks-Mumford 
bundle $\F_{HM}$ \cite{HM}. 
It is the only known indecomposable rank two vector bundle on $\mathbb P^4_k$ if char $k=0$, though others have been discovered when char $k = p > 0$ \cite{MK,MKPR}. 
The bundle $\F_{HM}$ is not globally generated, but has a space of sections for which the 
cokernel of the evaluation map is a line bundle on a smooth curve $L$ which is a union of $25$ disjoint lines; 
a general such section vanishes along an abelian surface $X_0$ which is minimal for its even linkage class $\L$. 
We show that $\F_{HM}$ is a quotient of the rank $7$ vector bundle $\N_0$ corresponding to 
$\L$ via the correspondence \cite{R2} and use Theorem \ref{main2} to show that every integral surface in $\L$ is smoothable (Example \ref{HMeven}).
The bundle $\N_0^*$ corresponding to the odd Horrocks-Mumford linkage class $\L^*$ 
has rank $17$: we construct a rank $2$ quotient sheaf $\A$ of $\N_0^*$ which is \CD2 reflexive with singular scheme precisely the curve $L$ consisting of $25$ lines and use Theorem \ref{main1} to show that every integral surface in $\L^*$ is smoothable (Example \ref{HModd}). 

Syzygy bundles provide another interesting example of even linkage classes. The kernel $\N_0$ of a surjection $\oplus_{i=1}^4 \O_{\mathbb P^3} (-d_i) \to \O_{\mathbb P^3}$ determines an even linkage class of of curves on $\mathbb P^3$. Martin-Deschamps and Perrin completely worked out the smoothable classes in these cases \cite{MDP2} and these are typically not the same as the integral elements \cite{Nfix}, the smallest numerical case being Hartshorne's example of an integral curve not smoothable in the Hilbert scheme \cite{H}. 
Similarly the kernel $\N_0$ of a surjection $\oplus_{i=1}^5 \O_{\mathbb P^4} (-d_i) \to \O_{\mathbb P^4}$ 
gives an even linkage class of surfaces on $\mathbb P^4$. 
In $\S 4$ we show that all integral elements are smoothable in these classes when all the $d_i$ are the same (Example \ref{surface}), but in general we can expect a situation as complicated as for curves on $\mathbb P^3$, so we pose the following.  

\begin{ques}\label{Q2}{\em
Let $\N_0$ be the kernel of a surjection $\oplus_{i=1}^5 \O (-d_i) \to \O$ on $\mathbb P^4$. 
Which members of the corresponding even linkage class $\L$ deform to smooth or integral varieties?
\em}\end{ques}

This work is organized as follows. 
In Section 2 we use Fitting schemes to classify sheaves whose local quotient by a vector 
bundle is an ideal sheaf of a smooth codimension two 
subvariety and prove Theorem \ref{main1}. 
In Section 3 we consider reflexive sheaves 
with spaces of sections that don't generate, but whose cokernel of the evaluation map behaves well. 
The main result is Theorem \ref{notgg}, 
which generalizes Theorems \ref{main1} and \ref{main2} when $X = \mathbb P^3$ or $X = \mathbb P^4$. 
In Section 4 we give applications to smoothing members in even linkage 
classes of curves in $\mathbb P^3$ and surfaces on $\mathbb P^4$, including an explanation of the linkage theory of the Horrocks-Mumford surface.   

\section{Reflexive sheaves and stratification by rank}

We use Fitting ideals to classify the coherent sheaves $\F$ on a smooth variety $X$ which are locally 
the extension of a vector bundle and an ideal sheaf of a smooth codimension two subvariety;  
the resulting sheaves are \cd2 (abbreviated \CD2).  
When $X = \mathbb P^3$, these are the curvilinear sheaves introduced by Hartshorne and Hirschowitz \cite{HH} and studied by Martin-Deschamps and Perrin \cite{MDP2}. 
We show that if $\F$ is a rank $r$ reflexive \CD2 on a smooth fourfold $X$ and $\sing \F$ has integral curve components, 
then sufficiently general maps $\phi: \mathcal E \to \F$ with $\E$ a rank $(r-1)$-bundle drop rank along a smooth surface. 
We give a variant of Chang's filtered Bertini theorem \cite{C} for these sheaves. 

A coherent sheaf $\F$ on a smooth variety $X$ has a local presentation 
\begin{equation}\label{pres}
\O^n_U \stackrel{u}{\to} \O^m_U \to \F_U \to 0
\end{equation}
on an open affine $U \subset X$. 
The $i$th Fitting scheme $S_i (\F)$ has ideal generated by the $(m-i+1)$-minors of the matrix for $u$. 
Since this ideal is independent of the presentation \cite[$\S 20.2$]{E}, $S_i (\F)$ is well defined and 
is set-theoretically the locus where $\rk u \leq m-i$, or equivalently $\rk \F \geq i$. 
Since $\rm{rank} \; \F = r$, then $S_i (\F) = X$ for $i \leq r$ and we define 
$\sing (\F) = S_{r+1} (\F)$, the {\it singular scheme} of $\F$, the closed subscheme where $\F$ is not a vector bundle. 
A closed subscheme $Z \subset X$ is {\it \cd2} (CD2 for short) if $Z$ has local 
embedding dimension at most $\dim X - 2$, or equivalently $Z$ locally lies on a smooth subvariety of codimension $2$. 

\begin{prop}\label{obvy}
Let $\F$ be a coherent sheaf on a smooth variety $X$ and let $\mathcal P$ be locally free of rank $k$. 
\begin{enumerate}
\item[(a)] If $\mathcal P \to \F \to \F^\prime \to 0$ is exact, then 
$S_{k+i} (\F) \subset S_{i} (\F^\prime)$. 
\item[(b)] If $0 \to \F^\prime \to \F \to \mathcal P \to 0$ is exact, then 
$S_{k+i} (\F) = S_{i} (\F^\prime)$.
\item[(c)] If $S_i (\F)$ is \CD2, then $S_{i+1} (\F)$ is empty.
\end{enumerate}
\end{prop}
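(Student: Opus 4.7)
My plan is to work locally with free presentations of $\F$ so that each Fitting ideal $S_j(\F)$ can be read off from matrix minors, and inclusions of ideals then translate directly into subscheme inclusions.

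For (a), on a small affine $U$ I choose a presentation $\O_U^n \stackrel{u}{\to} \O_U^m \to \F_U \to 0$ and trivialize $\mathcal{P}_U \cong \O_U^k$; since $\O_U^m$ surjects onto $\F_U$, the map $\O_U^k = \mathcal{P}_U \to \F_U$ lifts to some $v: \O_U^k \to \O_U^m$, producing a presentation $[u|v]: \O_U^{n+k} \to \O_U^m \to \F'_U \to 0$ of the quotient. Any $(m-i+1) \times (m-i+1)$ submatrix of $[u|v]$ uses at most $k$ of the $v$-columns, hence at least $m-k-i+1$ columns from $u$; Laplace expansion along the $v$-columns writes its determinant as a sum of products, each with a factor that is a minor of $u$ of size at least $m-k-i+1$. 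Iterating Laplace expansion places every such minor in the ideal generated by the $(m-k-i+1)$-minors of $u$. This shows the $(m-i+1)$-minors of $[u|v]$ lie in the $(m-k-i+1)$-minor ideal of $u$, which is exactly the inclusion $S_{k+i}(\F) \subset S_i(\F')$ as schemes.

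For (b), local freeness of $\mathcal{P}$ splits the sequence on a suitable $U$, giving $\F_U \cong \F'_U \oplus \O_U^k$; direct-summing a presentation of $\F'_U$ with the identity on $\O_U^k$ presents $\F_U$ by the matrix with $u$ in the top $m$ rows and zeros in the bottom $k$. Any $(m-i+1)$-minor using one of the zero rows vanishes, so the ideal of $S_{k+i}(\F)$ on $U$ collapses to the ideal of $(m-i+1)$-minors of $u$, namely the ideal of $S_i(\F')$, yielding equality on $U$ and hence globally.

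For (c), fix $x \in S_i(\F)$ and set $R = \O_{X,x}$ with maximal ideal $\m$. Choose a \emph{minimal} free presentation $R^n \stackrel{u}{\to} R^m \to \F_x \to 0$, so that $m = \dim_{k(x)} \F \otimes k(x)$ and, by Nakayama, every entry of $u$ lies in $\m$. Let $J$ denote the local ideal of $S_i(\F)$ at $x$, generated by the $(m-i+1)$-minors of $u$; then $J \subset \m^{m-i+1}$. The \CD2\ hypothesis reads $\dim_k \m/(\m^2+J) \leq \dim R - 2$, which forces the image of $J$ in $\m/\m^2$ to have dimension at least $2$, so $J \not\subset \m^2$. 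If $m > i$ then $m-i+1 \geq 2$ and $J \subset \m^2$, contradicting the previous sentence. Hence $m = i$, i.e., the fiber rank of $\F$ at $x$ is exactly $i$, so $x \notin S_{i+1}(\F)$. Since this holds for every $x \in S_i(\F)$ and $S_{i+1}(\F) \subset S_i(\F)$, the scheme $S_{i+1}(\F)$ is empty. I expect the main subtlety to lie here: the argument only works with a \emph{minimal} presentation, since otherwise the entries of $u$ need not lie in $\m$ and the minors need not vanish to high enough order to clash with the \CD2\ embedding-dimension bound.
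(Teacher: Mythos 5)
Your proposal is correct and follows essentially the same route as the paper: part (a) by lifting $\mathcal P_U \cong \O_U^k$ to extra columns of a presentation and Laplace-expanding the $(m-i+1)$-minors into $(m-k-i+1)$-minors of $u$, part (b) via the local splitting $\F_U \cong \F'_U \oplus \O_U^k$ and the zero-block presentation, and part (c) via a minimal presentation whose entries lie in $\m$, so that larger minors land in $\m^2$ and violate the embedding-dimension bound. The only difference is cosmetic: the paper disposes of the degenerate case $k+i>m$ in (a) explicitly (where $S_{k+i}(\F)=\emptyset$), which you handle implicitly through the convention on minors of non-positive size, and your (c) is the contrapositive of the paper's phrasing.
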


\begin{proof}
(a) Suppose $\F$ has local presentation (\ref{pres}). Then $S_{k+i} (\F)$ is empty for $k+i > m$ because $\F$ is locally generated by $m$ elements, 
so we may assume $k+i \leq m$. Working on an open affine $U$ where 
$\mathcal P_U \cong \O_U^k$, we obtain a presentation $\O^n \oplus \O^k \stackrel{u^\prime}{\to} \O^m \to \F^\prime$ 
where $u^\prime = [u,a]$ and $a$ is a $k \times m$ matrix. Since $m-i+1 > k$, each 
($m-i+1$)-minor of $u^\prime$ expands in terms of ($m-k-i+1$)-minors from $u$, which shows that 
$S_{k+i} (\F) \subset S_i (\F^\prime)$ scheme-theoretically. 

(b) Locally $\F_U \cong \F^\prime_U \oplus \O_U^k$, so a local presentation $\O_U^n \stackrel{u^\prime}{\to} \O_U^m \to \F^\prime \to 0$ 
yields $\O_U^n \stackrel{u}{\to} \O_U^{m+k} \to \F \to 0$ with $u = \left[\begin{array}{c} u^\prime \\ 0 \end{array} \right]$ and the 
minors generating the ideal of $S_{k+i} (\F)$ are equal to those generating the ideal of $S_i (\F^\prime)$. 

(c) To compute the Fitting ideals of $\F$ at $p\in X$, we may assume the local presentation 
$u_p: \O_p^n \to \O_p^m$ is replaced by a minimal presentation, so that each entry of $u$ is in $\m_p$. 
Thus if $p \in S_{i+1}(\F)$, then all $m-i$ minors of $u_p$ belong to $\m_p$, hence all $m-i+1$ minors belong to $\m_p^2$, 
which implies that $S_i(\F)$ cannot be \CD2 at $p$. See also \cite[II, Cor. 1.7]{MDP2}.
 
\end{proof}

\begin{defn}\label{CD2}{\em A rank $r$ sheaf $\F$ on $X$ is {\em \cd2} (\CD2 for short) if $\F$ is torsion 
free and $\sing (\F) = S_{r+1} (\F)$ is a \CD2 scheme.}
\end{defn}
This extends the notion of curvilinear sheaves on $X=\mathbb P^3$ introduced by Hartshorne and Hirschowitz \cite{HH}. 
We extend \cite[II, Proposition 3.6]{MDP2} to higher dimension as follows. 

\begin{prop}\label{classify}
Let $\F$ be a rank $r$ sheaf on a smooth variety $X$ with $\dim X \geq 2$. Then the following are equivalent:
\begin{enumerate}
\item $\F$ is a \CD2 sheaf. 
\item For each $p \in X$, the stalk $\F_p$ satisfies one of the following:
\begin{enumerate}
\item $\F_p \cong \O_p^r$.
\item $\F_p$ is the cokernel of a map $\O_p \stackrel{(x,y,f_3, \dots, f_{r+1})}{\to} \O_p^{r+1}$ with 
$x,y \in \O_p$ part of a regular system of parameters and $f_i \in \m_p$ for each $i$. 
\end{enumerate}
\item For each $p \in X$, there is an exact sequence $0 \to \O_p^{r-1} \to \F_x \to \I_{S,p} \to 0$ with $S$ a smooth germ at $p$ of codimension $2$. 
\end{enumerate}
\end{prop}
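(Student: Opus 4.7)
The plan is to show $(2)\Leftrightarrow(3)$ using a Koszul argument, $(2)\Rightarrow(1)$ by direct verification, and the substantive implication $(1)\Rightarrow(2)$ via Fitting ideals and a UFD factorization. For $(2)\Rightarrow(3)$, from $\F_p = \cok(\O_p\xrightarrow{(x,y,f_3,\ldots,f_{r+1})}\O_p^{r+1})$ I would set $S = V(x,y)$ and define $\pi\colon \F_p\to \I_{S,p}$ by composing projection onto the first two coordinates with the canonical isomorphism $\O_p^2/\O_p\cdot(x,y)\cong\I_{S,p}$, $(a,b)\mapsto ya - xb$. Using that $(x,y)$ is a regular sequence one checks that every element of $\ker\pi$ has a unique representative $(0,0,b_3,\ldots,b_{r+1})$, so $\ker\pi\cong \O_p^{r-1}$. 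For $(3)\Rightarrow(2)$, lift the Koszul resolution of $\I_{S,p}$ to $\F_p$ by taking a basis $v_1,\ldots,v_{r-1}$ of the $\O_p^{r-1}$ subsheaf and lifts $w_1,w_2\in \F_p$ of $x, y$; the Koszul relation then lifts to $yw_1 - xw_2 = \sum\lambda_iv_i$, giving a one-relation presentation. If any $\lambda_i$ were a unit, eliminating $v_i$ would express $\F_p$ with only $r$ generators, making it free (being torsion-free of rank $r$), contradicting $p\in\sing\F$; hence $\lambda_i\in\m_p$ and the presentation has form (b).

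The implication $(2)\Rightarrow(1)$ is immediate: the map $\O_p\to\O_p^{r+1}$ is injective since $\O_p$ is a domain and $x\neq 0$, so $\F_p$ is torsion-free, and $S_{r+1}(\F)_p = (x,y,f_3,\ldots,f_{r+1})\supset(x,y)$ exhibits $\sing\F\subset V(x,y)$ as codimension-$2$ smoothable.

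The substantive direction is $(1)\Rightarrow(2)$. At $p\notin\sing\F$, $\F_p$ is free of rank $r$ and (a) holds. Assume $p\in\sing\F$ and take a minimal presentation $\O_p^n\xrightarrow{u}\O_p^m\to \F_p\to 0$ with all entries of $u$ in $\m_p$. First I would show $m = r+1$: the generators of $S_{r+1}(\F)_p$ are the $(m-r)$-minors of $u$, each lying in $\m_p^{m-r}$, and the \CD2 hypothesis forces this ideal to contain a regular pair with linearly independent images in $\m_p/\m_p^2$, so $m-r\leq 1$; combined with $m\geq r+1$ (since $\F_p$ is not free) this gives $m = r+1$. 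Since $\rk\F = r$ globally, $S_r(\F) = X$ scheme-theoretically, so the $2$-minors of $u$ vanish identically and $u$ has rank $\leq 1$ everywhere. Over the UFD $\O_p$ one factors such a rank-one matrix as $u = v_0 w_0^T$ with $v_0\in \O_p^{r+1}$ having entries of $\gcd 1$ and $w_0\in\O_p^n$; the cokernel then fits in $0\to \O_p/I_w\to \F_p\to \O_p^{r+1}/\O_p\cdot v_0\to 0$ where $I_w = (w_{0,1},\ldots,w_{0,n})$. Torsion-freeness of $\F_p$ forces $I_w = \O_p$, so some $w_{0,j}$ is a unit, column operations reduce $u$ to the single column $v_0$, and the presentation becomes $0\to \O_p\xrightarrow{v_0}\O_p^{r+1}\to \F_p\to 0$. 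Finally $v_0\in \m_p^{r+1}$ (else a change of basis in $GL_{r+1}(\O_p)$ sends $v_0$ to $e_1$, making $\F_p$ free), and the \CD2 condition on the ideal $(v_{0,1},\ldots,v_{0,r+1}) = S_{r+1}(\F)_p$ allows a further element of $GL_{r+1}(\O_p)$ to arrange that the first two entries form a regular pair $x, y$, yielding (b).

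The main obstacle is combining the UFD factorization of the rank-one matrix $u$ with torsion-freeness to force $I_w = \O_p$: this is what pins the minimal presentation down to a single relation. Without this rigidity the codimension-$2$ Fitting information alone would not constrain the local structure tightly enough to force shape (b).
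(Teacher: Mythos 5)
Your proposal is correct in substance and in places takes a genuinely different route from the paper. For $(1)\Rightarrow(2)$ the paper, after reducing to a presentation $\O_p^m \to \O_p^{r+1}$ with vanishing $2\times 2$ minors, arranges the $(1,1)$ entry to be some $x \in \m_p \setminus \m_p^2$, uses that such an $x$ is prime in the regular local ring together with torsion freeness to show every column is an $\O_p$-multiple of the first, and only then normalizes the second entry; you instead factor the rank-one matrix as $v_0w_0^T$ over the UFD $\O_p$ with $v_0$ primitive and use torsion freeness through $0\to\O_p/I_w\to\F_p$ to force $I_w=\O_p$. The essential ingredients (unique factorization in $\O_p$ plus torsion freeness) are the same, but your packaging isolates more cleanly where torsion freeness enters; your count $m=r+1$ is the same computation the paper delegates to Proposition \ref{obvy}(c). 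For the remaining implications the paper runs the cycle $(1)\Rightarrow(2)\Rightarrow(3)\Rightarrow(1)$, proving $(2)\Rightarrow(3)$ by the snake lemma (your explicit kernel computation is the same argument) and $(3)\Rightarrow(1)$ by Fitting-ideal functoriality, Proposition \ref{obvy}(a); you instead prove $(3)\Rightarrow(2)$ directly by lifting the Koszul syzygy of $(x,y)$, a nice self-contained alternative, and $(2)\Rightarrow(1)$ directly. Two small points there: statement $(3)$ does not presuppose $p\in\sing\F$, so the case of a unit $\lambda_i$ should be recorded as landing in alternative (a) rather than as a contradiction; and your final appeal to $GL_{r+1}(\O_p)$ in $(1)\Rightarrow(2)$ is legitimate, with the one-line justification that, since all entries of $v_0$ lie in $\m_p$, the image of $(v_{0,1},\dots,v_{0,r+1})$ in $\m_p/\m_p^2$ is spanned by the images of the entries themselves, so the \CD2\ condition yields two entries with independent linear parts and a permutation already suffices.

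The one step whose justification would fail as written is torsion freeness in $(2)\Rightarrow(1)$: injectivity of $\O_p \xrightarrow{(x,y,f_3,\dots,f_{r+1})} \O_p^{r+1}$ does not imply the cokernel is torsion free (for instance $\O_p \xrightarrow{(x^2,\,xy)} \O_p^2$ is injective, yet the class of $(x,y)$ is a nonzero torsion element of its cokernel). What is really needed is that the first two entries form a regular sequence --- or, more simply, quote your own $(2)\Rightarrow(3)$: $\F_p$ is an extension of $\I_{S,p}$ by $\O_p^{r-1}$, hence torsion free. With that repair the argument is complete.
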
 

\begin{proof} 

$(1) \Rightarrow (2):$ If $p \not \in S_{r+1} (\F)$, then $\F$ is locally free at $p$ giving (a), so we may assume $p \in S_{r+1} (\F)$. 
Then $S_{r+2} (\F) = \emptyset$ by Proposition \ref{obvy} (c), so $\F$ has rank $r+1$ at $p$ and a local presentation
\[
\O^m_p \stackrel{u}{\to} \O^{r+1}_p \to \F_p \to 0,
\]
with entries of the matrix $u$ generating the ideal for $S_{r+1} (\F)$ and the $2 \times 2$ minors of $u$ vanishing on a neighborhood of $p$, 
hence equal to $0$ in $\m_p$. 
We may assume that $u_{1,1}=x \in \m_p - \m_p^2$. 
Suppose that $x$ does not divide $u_{1,j}$ for some $j > 1$. Then $x$ divides $u_{k,1}$ for all $k$ due to the vanishing $2 \times 2$ minors, 
hence the first column of $u$ has the form $x b$ with $b = (1, b_2, \dots, b_{r+1})^t$. 
Since $x b$ maps to zero in $\F_p$ which is torsion free, the image of $b$ in $\F_p$ is zero, hence is in the image of $u$, but 
this is impossible since all entries of $u$ lie in $\m_p$. Therefore $x$ divides $u_{1,j}$ for each $j$, so we can write 
$u_{1,j} = x w_j$ with $w_1 = 1$ and $v_j = u_{j,1}$. 
The vanishing of $2 \times 2$ minors yields $u_{i,j} = v_i w_j$ for $i, j > 1$, so each column of $u$ is a multiple of the first 
column and the image of $u$ is the span of the first column, thus we may assume $m=1$. Since the entries of $u$ generate the ideal 
of $S_{r+1} (\F)$, we may assume $u_{2,1} = y$ where $x,y$ are part of a regular system of parameters for $\m_p$, giving possibility (b). 

$(2) \Rightarrow (3):$ Clear in case (a) by taking $p \not \in S$. In case (b), let $\pi: \O_p^{r+1} \to \O_p^2$ be the projection onto the first two factors. Then $(x,y)$ defines a smooth 
codimension two subvariety $S$ locally at $p$ and we apply the snake lemma to 
\[\begin{array}{ccccccccc}
0 & \to & \O_p & \stackrel{(x,y,f_3, \dots, f_{r+1})}{\to} & \O_p^{r+1} & \to & \F_p & \to & 0 \\
 & & \downarrow & &  \downarrow \pi & & \downarrow &  & \\
0 & \to & \O_p & \stackrel{(x,y)}{\to} & \O_p^{2} & \to & \I_{S,p} & \to & 0.
\end{array}\]

$(3) \Rightarrow (1):$ Follows from Proposition \ref{obvy} (a).
\end{proof}

\begin{rmk}\label{consequence}{\em Let $\F$ be a \CD2 sheaf of rank $r$ on $X$ as in Proposition \ref{classify}.
\begin{enumerate}
\item[(a)] If $r=1$, the embedding $\F \hookrightarrow \F^{\vee\vee} = \mathcal L$ shows that $\F \cong \I_S \otimes \L$ with $S$ smooth of codimension two and $\L$ a line bundle. 

\item[(b)] When $\F$ is \CD2 of rank $r$ and $p \in \sing \F$, the ideal of $\sing \F$ at $p$ is 
generated by $x,y,f_3, \dots f_{r+1}$ appearing in Proposition \ref{classify} (2b). 
If some $f_i \not \in (x,y)$ in Proposition \ref{classify} (2b), then $\F_p$ is reflexive by Lemma \ref{MDPremark2.4} because $\codim \sing \F \geq 3$. 
On the other hand, if all $f_i \in (x,y)$, then we can change basis so they become $0$, in which case 
$\F_p \cong \O_p^{r-1} \oplus \I_{S,p}$ with $S$ smooth of codimension two defined by $(x,y)$, hence $\F_p$ is not reflexive. 
 
\item[(c)] When $\dim X = 3$, the local ideal $(x,y,f_3, \dots, f_{r+1})$ of $p \in \sing \F$ can be written $(x,y,z^n)$ with $n \geq 1$ and we 
recover \cite[II, Proposition 3.6]{MDP2}. 

\item[(d)] When $\dim X =4$, the local ideal can be written $(x,y,f_3, \dots, f_{r+1})$ with $x,y,z,w$ local parameters 
and $f_i \in (z,w)$. For example, we can define a \CD2 reflexive rank $3$ reflexive sheaf on $X = \mathbb A^4$ by 
$0 \to \O \stackrel{(x,y,z^2w^2,zw^3)}{\to} \O^4 \to \F \to 0$. 
The singular scheme $\sing (\F)$ is the union of the line $x=y=z=0$, the double line $x=y=w^2=0$ and an embedded point supported  at the origin. 
For our smoothing results, we will avoid such non-reduced curves.
\end{enumerate}
\em}\end{rmk}

The next result helps to identify reflexive quotients of reflexive sheaves. 

\begin{lem}\label{MDPremark2.4}
Suppose $0 \to \mathcal P \to \E \to \F \to 0$ is exact with $\mathcal P$ locally free and $\E$ reflexive. 
Then $\F$ is reflexive if and only if $\codim \sing \F \geq 3$. 
\end{lem}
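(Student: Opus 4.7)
The plan is to reduce everything to depth via the standard fact that, on a smooth variety, a coherent sheaf $\F$ is reflexive if and only if it is torsion-free and satisfies Serre's condition $S_2$, that is, $\depth \F_p \geq \min(2, \dim \O_p)$ for every $p \in X$. Since $\O_p$ is a regular local ring, the Auslander--Buchsbaum formula also tells us that $\F$ is locally free at $p$ exactly when $\depth \F_p = \dim \O_p$. Both directions of the equivalence then become elementary depth estimates for the short exact sequence $0 \to \mathcal P \to \E \to \F \to 0$.

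For the forward implication I would fix $p \in \sing \F$. Since $\F$ fails to be locally free at $p$, Auslander--Buchsbaum gives $\depth \F_p < \dim \O_p$. If $\F$ is reflexive then $S_2$ forces $\depth \F_p \geq \min(2, \dim \O_p)$, and combining these two inequalities forces $\dim \O_p \geq 3$. Hence $\codim \sing \F \geq 3$.

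For the converse, assume $\codim \sing \F \geq 3$. At every $p$ with $\dim \O_p \leq 2$ the sheaf $\F$ is locally free (hence reflexive) by hypothesis, so it suffices to verify $S_2$ at points with $\dim \O_p \geq 3$. The depth lemma applied to $0 \to \mathcal P \to \E \to \F \to 0$ yields
\[
\depth \F_p \geq \min(\depth \E_p, \depth \mathcal P_p - 1).
\]
Since $\mathcal P$ is locally free we have $\depth \mathcal P_p = \dim \O_p \geq 3$, and since $\E$ is reflexive, $\depth \E_p \geq \min(2,\dim \O_p) = 2$. Thus $\depth \F_p \geq 2$, which is the required $S_2$ bound; torsion-freeness is automatic because at each codimension one point $\F$ is in fact free.

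The argument is entirely formal, so I anticipate no genuine obstacle. The one subtle point is to invoke the hypothesis $\codim \sing \F \geq 3$ in two distinct ways in the converse: first to guarantee local freeness (and hence reflexivity) at points with $\dim \O_p \leq 2$, and second to secure the inequality $\depth \mathcal P_p - 1 \geq 2$ needed to run the depth lemma at the remaining points. Without this codimension assumption, the depth estimate produced by the SES degrades to $\depth \F_p \geq 1$, which is not enough to conclude $S_2$.
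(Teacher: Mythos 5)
Your argument follows essentially the same route as the paper's: both directions reduce to the criterion ``reflexive $=$ torsion-free $+$ depth $\geq 2$ at points of codimension $\geq 2$'' (the paper cites \cite[Prop.~1.3]{SRS}), the forward implication is the standard fact that a reflexive sheaf on a smooth variety is locally free off a set of codimension $\geq 3$ (the paper cites \cite[Cor.~1.4]{SRS}, you re-derive it via Auslander--Buchsbaum), and the converse is the same depth chase: your inequality $\depth \F_p \geq \min(\depth \E_p, \depth \mathcal P_p - 1)$ is exactly the paper's local cohomology long exact sequence with $\depth \mathcal P_p = \dim \O_p$ and $\depth \E_p \geq 2$.

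The one step you should repair is the parenthetical claim that ``torsion-freeness is automatic because at each codimension one point $\F$ is in fact free.'' Freeness in codimension one does not rule out torsion: a torsion subsheaf can be supported in codimension $\geq 2$ (e.g.\ $\O_X \oplus \O_Z$ with $\codim Z = 2$ is free at all codimension-one points). The conclusion is still available from what you have already proved: if $T \subset \F$ were a nonzero torsion subsheaf and $x$ a generic point of its support, then $H^0_x(\F_x) \supseteq T_x \neq 0$, so $\depth \F_x = 0$; but at such an $x$ (necessarily non-generic in $X$) you have either local freeness of $\F$ (codimension $\leq 2$, by hypothesis) or $\depth \F_x \geq 2$ (codimension $\geq 3$, by the depth estimate), a contradiction. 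Alternatively, argue as the paper does and show $H^0_x(\F_x)=0$ directly for every non-generic $x$, using $H^0_x(\E_x)=0$ ($\E$ torsion-free) and $H^1_x(\mathcal P_x)=0$ ($\mathcal P$ locally free of depth $\geq 2$ there). With that fix the proof is complete.
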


\begin{proof} $\Rightarrow:$ If $\F$ is reflexive, then $\codim \sing \F \geq 3$ by \cite[Corollary 1.4]{SRS}.

$\Leftarrow:$ 
First observe that $\F$ is torsion free, or equivalently that $H^0_x (\F_x) = 0$ for each non-generic point $x \in X$. 
This is clear if $\dim \O_x \leq 2$ because $\F_x$ is a free $\O_x$-module. 
If $\dim \O_x > 2$, then 
$H^0_x (\E_x) = 0$ because $\E$ is torsion free and $H^1_x (\mathcal P_x)=0$ because $\rm{depth} \; \mathcal P_x = \dim \O_x > 1$, 
so the long exact local cohomology sequence gives $H^0_x (\F_x) = 0$. 
It remains to show that $\rm{depth} \; \F_x \geq 2$ whenever $\dim \O_x \geq 2$ \cite[Prop. 1.3]{SRS}. 
This is clear if $\dim \O_x = 2$ because $\F_x$ is free. 
If $\dim \O_x > 2$, then $\rm{depth} \; \E_x \geq 2$ because $\E_x$ is reflexive, hence 
$H^i_x (\E_x)=0$ for $i < 2$. Also $H^i_x (\mathcal P_x)=0$ for $i < \dim \O_x$, so the long exact local cohomology sequence 
shows that $H^i_x (\F_x)=0$ for $i < 2$, therefore $\rm{depth} \; \F_x \geq 2$. 
\end{proof}

If there is an exact sequence 
\begin{equation}\label{eins}
0 \to \E \stackrel{\phi}{\to} \F \to \I_Y \otimes \L \to 0
\end{equation}
with $\E$ locally free, $\L$ a line bundle, and $Y$ smooth of codimension two, then $\F$ is a \CD2 by Proposition \ref{classify} (3) 
and we say that $\cok \phi$ is a \textit{twisted ideal sheaf} of $Y$. 
We will show in Theorem \ref{rank2} that if $\rank \E = \rank \F -1$, $\Hom (\E, \F)$ is globally generated and $\dim X \leq 4$, then $\cok \phi$ is the twisted ideal sheaf of a codimension two smooth 
subscheme. 
We will repeatedly use the following in our dimension counting arguments. 

\begin{lem}\label{count}
Let $M_{a,b} (k) \cong \mathbb A^{ab}$ be the space of $a \times b$ matrices over a field $k$ with $a \leq b$. 
If $c \leq a$,  then the space $M_c \subset M_{a,b} (k)$ of matrices having rank $\leq c$ is a 
subvariety of codimension $(a-c)(b-c)$ with singular locus $\sing M_c = M_{c-1}$. 
\end{lem}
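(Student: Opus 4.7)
The plan is to handle the codimension count and the identification of the singular locus separately, using two standard tools: a parametrization of the rank-$c$ stratum as the base of a smooth surjection from an open subset of a product of matrix spaces, and the Jacobian criterion applied to the ideal of $(c+1) \times (c+1)$ minors.

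For the dimension, I would write each rank-$c$ matrix as a product $A = UV$ with $U \in M_{a,c}(k)$ of full column rank and $V \in M_{c,b}(k)$ of full row rank; the multiplication map $(U,V) \mapsto UV$ from the open subset of such full-rank pairs onto $M_c \setminus M_{c-1}$ is a principal $\mathrm{GL}_c(k)$-bundle, since $UV = U'V'$ with both pairs of full rank forces $(U',V') = (UT^{-1}, TV)$ for a unique $T \in \mathrm{GL}_c(k)$. This yields $\dim M_c = ac + cb - c^2 = c(a+b-c)$, giving codimension $ab - c(a+b-c) = (a-c)(b-c)$. As an immediate byproduct, $M_c \setminus M_{c-1}$ is the quotient of a smooth irreducible variety by a free action, hence smooth and irreducible of the claimed dimension, and in particular contains no point of $\sing M_c$.

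To identify $\sing M_c$ with $M_{c-1}$, I would apply the Jacobian criterion to the ideal $J \subset k[x_{ij}]$ generated by all $(c+1) \times (c+1)$ minors of the generic matrix $(x_{ij})$. Cofactor expansion shows that the partial derivative $\partial m_{I,K}/\partial x_{kl}$ of such a minor $m_{I,K}$ equals $\pm$ a specific $c \times c$ minor when $(k,l) \in I \times K$ and vanishes otherwise. At a point $A_0 \in M_{c-1}(k)$ every $c \times c$ minor of $A_0$ is zero, so the entire Jacobian of any finite generating set of $J$ vanishes at $A_0$. Hence the Zariski tangent space $T_{A_0} M_c$ has full ambient dimension $ab$, which strictly exceeds $\dim M_c$ whenever $c < a$, placing $A_0 \in \sing M_c$. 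Combined with the smoothness of the rank-$c$ stratum, this yields $\sing M_c = M_{c-1}$.

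The main subtlety, rather than a genuine obstacle, is ensuring that $M_c$ is irreducible so that the tangent-space computation at $A_0$ really witnesses singularity of $M_c$ itself and not smoothness of some stray component; this is automatic from the parametrization, since $M_c$ is the closure of the image of an irreducible source. A minor caveat: the case $c = a$ is degenerate, with $M_c$ equal to the smooth ambient space $M_{a,b}(k)$, so the equality $\sing M_c = M_{c-1}$ carries the implicit hypothesis $c < a$.
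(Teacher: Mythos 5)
The paper offers no argument of its own here---it simply cites Ottaviani \cite{O}---so any complete proof is an independent route. Your dimension count is the standard one and is fine: the rank factorization $A=UV$ gives the stratum of rank exactly $c$ dimension $c(a+b-c)$, hence codimension $(a-c)(b-c)$, and its smoothness and the irreducibility of $M_c$ follow. (For the asserted principal $\mathrm{GL}_c$-bundle structure you should note Zariski-local triviality, e.g.\ via the explicit section $U=A_{\cdot,K}$, $V=(A_{I,K})^{-1}A_{I,\cdot}$ over the locus where the $c\times c$ minor $A_{I,K}$ is invertible, or simply invoke homogeneity of the stratum under $\mathrm{GL}_a\times \mathrm{GL}_b$; this is minor. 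Your caveat that $c=a$ is degenerate is correct and harmless.)

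There is, however, one genuine gap, in the step identifying $\sing M_c$ with $M_{c-1}$. The vanishing of the Jacobian of the $(c+1)$-minors at $A_0\in M_{c-1}$ shows that the Zariski tangent space of the \emph{scheme} $V(J)$ is all of $k^{ab}$; to conclude that the \emph{variety} $M_c$ (the reduced structure, which is what the lemma's singular locus refers to) is singular at $A_0$, you need $J$ to equal the full ideal $I(M_c)$, i.e.\ that the determinantal ideal is radical (in fact prime). That is true, but it is a nontrivial classical theorem and you neither prove nor cite it---note that the inference ``the Jacobian of some defining equations vanishes, hence the variety is singular'' is false in general, as $J=(x^2)$ in $\mathbb{A}^1$ shows; ironically you guarded against the other subtlety (stray components) but not this one. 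Two quick repairs: (i) for $A_0$ of rank $e\le c-1$ and any rank-one matrix $B$, the entire line $A_0+tB$ has rank $\le e+1\le c$ and so lies in $M_c$; hence the tangent space of the reduced variety at $A_0$ contains all rank-one matrices, which span $M_{a,b}(k)$, giving tangent dimension $ab>\dim M_c$ when $c<a$; or (ii) near a rank-$(c-1)$ point use the Schur-complement chart to write $M_c$ locally as a product of a smooth factor with the rank $\le 1$ locus in $(a-c+1)\times(b-c+1)$ matrices, whose vertex (the cone over a Segre variety) is visibly singular since its tangent cone is not a linear space. With either repair, your argument is complete and matches the standard textbook proof of the fact the paper quotes from \cite{O}.
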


\begin{proof}
See \cite[Teorema 2.1]{O}. 
\end{proof}

\begin{thm}\label{rank2}
Let $\F$ be a rank $r$ reflexive \CD2 sheaf on a smooth fourfold $X$ with 
$\sing \F$ having all curve components integral. Let $\E$ be a rank $k < r$ vector bundle and $V \subset H^0(\Hom (\E, \F))$ a 
finite dimensional vector space of sections that globally generate. Then the general map $\phi: \E \to \F$ is injective, 
let $\overline \F = \cok \phi$ be the cokernel.
\begin{enumerate}
\item[(a)] If $k=r-1$, then $\overline \F = \cok \phi$ is the twisted ideal sheaf of a smooth surface. 
\item[(b)] If $k < r -1$, then $\overline \F = \cok \phi$ is a reflexive \CD2 sheaf 
with curve components of $\sing \overline \F$ integral.  
\end{enumerate}
\end{thm}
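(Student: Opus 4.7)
The plan is to analyze $\overline{\F} := \cok \phi$ locally using the presentation from Proposition \ref{classify}, and to use a Kleiman-style dimension count via the generality of $\phi \in V$.

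First, generic injectivity and structure on the locally free locus. Since $V$ globally generates $\Hom(\E,\F)$, a general $\phi$ attains maximal generic rank $k$; as $\E$ is torsion-free, $\ker \phi = 0$ and $\overline{\F}$ has generic rank $r-k$. On $U := X \setminus \sing \F$ where $\F|_U$ is a vector bundle, Theorem \ref{FB} with trivial filtration applies: the degeneracy locus has codimension $r-k+1$ with singularities in codimension $\geq \min\{4, r-k+2\}$. On the fourfold $X$ this gives in case (a), $k = r-1$, a smooth codim-$2$ surface on $U$; in case (b), $k \leq r-2$, a codim-$\geq 3$ locus (a smooth integral curve when $k = r-2$, or isolated points otherwise), and $\overline{\F}|_U$ is reflexive and CD2 by Lemma \ref{MDPremark2.4}.

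The critical case is at $p \in \sing \F$. Use Proposition \ref{classify}(2)(b) to obtain a minimal local presentation $\O_p \xrightarrow{u} \O_p^{r+1} \to \F_p \to 0$ with $u = (x, y, f_3, \ldots, f_{r+1})^t$. Since $\O_p^k$ is free, $\phi$ lifts locally to $\tilde\phi : \O_p^k \to \O_p^{r+1}$, and global generation of $V$ ensures that for general $\phi$, $\tilde\phi(p)$ has maximal rank $k$. Row and column operations on the $(r+1) \times (k+1)$ matrix $[u \mid \tilde\phi]$ bring it into the block form $\begin{pmatrix} 0 & I_k \\ u' & 0 \end{pmatrix}$, so
\[
\overline{\F}_p \cong \cok\bigl(u' : \O_p \to \O_p^{r+1-k}\bigr), \qquad u'_j = u_{k+j} - \sum_{i=1}^{k} T_{j,i}\, u_i,
\]
where $T$ is a generic matrix derived from the lift. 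At a general point $p$ of an integral curve component $C$ of $\sing \F$, choose local parameters $x,y,z,w$ with $I_{C,p} = (x,y,z)$. Combined with Remark \ref{consequence}(d), the smoothness of $C$ at $p$ forces $\bar f_i = c_i \bar z$ in $\m_p/\m_p^2$ with some $c_i \neq 0$, so each $\bar u'_j$ is a generic linear combination of $\bar x, \bar y, \bar z$. For (a), $r+1-k = 2$ and generic $T$ makes $\bar u'_1, \bar u'_2$ linearly independent in $\m_p/\m_p^2$, so $(u'_1, u'_2)$ is part of a regular system of parameters and $\overline{\F}$ is the twisted ideal sheaf of a smooth codim-$2$ germ by Remark \ref{consequence}(a). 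For (b), $r+1-k \geq 3$: generic $T$ makes $(\bar u'_1, \ldots, \bar u'_{r+1-k})$ span the $3$-dimensional subspace $\mathrm{span}(\bar x, \bar y, \bar z)$, so $V(u'_1, \ldots, u'_{r+1-k})$ locally equals $C$ and $\overline{\F}$ is CD2 reflexive with $\sing \overline{\F}$ locally equal to $C$, preserving integrality.

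The remaining work is to handle non-general points of $\sing \F$, namely intersections of curve components, isolated singularities, and special points of curves, via an incidence-variety dimension count using Lemma \ref{count} applied to the Fitting stratification of $\Hom(\E(p), \F(p))$. This is the main obstacle: one must verify that for each such stratum of $\sing \F$, the codimension in $V$ of the locus where $\phi$ fails to produce the desired local structure strictly exceeds the dimension of the stratum, so that the projection of the incidence variety to $V$ is not dominant and a generic $\phi$ simultaneously avoids every bad configuration.
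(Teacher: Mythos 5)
Your overall strategy is the same as the paper's: treat $X\setminus \sing \F$ with Theorem \ref{FB} and Lemma \ref{MDPremark2.4}, and at $p\in\sing\F$ combine the presentation $u=(x,y,f_3,\dots,f_{r+1})^t$ from Proposition \ref{classify} with a local lift $a$ of $\phi$ to get the presentation $[u,a]$ of $\overline\F_p$; your row reduction to $u'$ is the same computation the paper performs with maximal minors. The genuine gap is that the quantitative heart of the proof is missing: you defer the incidence-variety count to a closing paragraph and do not carry it out, and you mislocate where it is needed. It is not only for intersections of components, isolated singularities and special curve points: since $\sing\F$ may be a curve, every pointwise genericity assertion you make (``generic $T$ makes $\bar u'_1,\bar u'_2$ independent'', ``generic $T$ makes the $\bar u'_j$ span $\langle \bar x,\bar y,\bar z\rangle$'') must hold simultaneously along a one-dimensional family, so one has to verify that the corresponding degeneracy condition on the fibre matrix has codimension at least two. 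That is exactly what the paper does with Lemma \ref{count}, isolating the precise conditions $\rank \bar a\le k-1$ (codimension $r-k+2$), $\rank\bar a_3\le k-2$ (codimension $2$, used for (a)), and $\rank\bar a_2\le k-1$ (codimension $r-k$), where $a_2,a_3$ are the lift with the top two, respectively three, rows deleted. For (a) you must also prove smoothness at the finitely many singular or isolated points of $\sing\F$, where a codimension-one condition ($\bar a_2$ invertible) suffices but the resulting local computation, giving the Fitting ideal $(dx+\text{terms in }f_i,\; dy+\text{terms in }f_i)$ and using $f_i\notin(x,y)$ from reflexivity (Remark \ref{consequence}(b)), still has to be done; your proposal never identifies the good configuration at those points.

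A second concrete problem is the boundary case $k=r-2$ in (b). Your spanning claim is, pointwise, only a codimension-$(r-k-1)$ condition on the fibre matrix, which is codimension one when $k=r-2$; the incidence set in $V\times\sing\F$ then has dimension equal to $\dim V$ and may dominate $V$, so ``generic $T$'' does not give the spanning along the curve, nor even at a prescribed general point, without the paper's additional argument that the fibre over a general $\phi$ is finite. The correct conclusion is only that $\sing\overline\F$ coincides with $\sing\F$ away from finitely many points of each curve component, which is what preserves integrality; as written, your case (b) is not established when $k=r-2$. Finally, your identification of $V(u'_1,\dots,u'_{r+1-k})$ with $C$ should be justified in both directions: every entry of $u'$ is a combination of the entries of $u$ and hence lies in $I_C$, giving $\sing\overline\F\supseteq C$ near $p$, while the independent linear parts give the reverse inclusion.
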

 
\begin{proof}
Let $U = X - \sing \F$ so that $\F_U$ is locally free. 
Since $V$ generates $\Hom(\E_U, \F_U)$, the general map $\phi: \E_U \to \F_U$ is injective and drops rank along smooth 
$Y \subset U$ of codimension $r-k+1$ by Theorem \ref{FB}.
Since $Y \subset U$ is locally defined by $k \times k$ minors of the matrix representing $\phi$, 
$\sing \overline\F_U =Y$ by definition and $\overline \F_U$ is \CD2. 
Thus the general map $\phi: \E \to \F$ is injective with cokernel $\overline \F$ singular along $Y$ and  
possibly other points of $\sing \F$. 
To complete the proof, we use dimension counting arguments to show that $\overline \F$ behaves as required along $\sing \F$, 
which has dimension $\leq 1$ by \cite[Cor. 1.4]{SRS}. 

For $p \in \sing \F$, Proposition \ref{classify} gives 
a local resolution $0 \to \O_p \stackrel{u}{\to} \O_p^{r+1} \to \F_p \to 0$ where $u = [x,y,f_3, \dots, f_{r+1}]^T$, 
$x,y \in \O_p$ are part of a sequence of parameters, and not all $f_i \in (x,y)$ by Remark \ref{consequence} (b). 
A map $\phi \in V$ localizes to $\phi_p:\E_p \to \F_p$, which lifts to a map 
$a: \O_p^k \to \O_p^{r+1}$ and  
gives a resolution 
$0 \to \O_p \oplus \O_p^k \xrightarrow{[u,a]} \O_p^{r+1} \to \overline \F_p \to 0$.
Furthermore, the map $\phi_p \otimes \k(p)$ is given by the matrix 
$\bar a = a \mod \m_p \in M_{r+1,k}(k(p))$. 
Since $V$ generates $\Hom(\E, \F)$, the map $V \to \Hom(\E(p), \F(p))$ is onto. 
The subspace of matrices $\bar a$ of rank $\leq k-1$ has codimension $r-k+2$ in $M_{r+1,k}(k(p))$ by Lemma \ref{count}, 
hence also its pre-image in $V$. 
This shows that $\B =\{(\phi , p) | \rank \bar a \leq k-1\} \subset V \times \sing \F$ has dimension 
$\dim V - (r-k+2) + 1$, hence cannot dominate $V$. 
Therefore the general $\phi \in V$ has the property that $\phi_p$ has rank $k$ modulo $\m_p$ at each point $p \in \sing \F$.

First we prove (a), so let $k=r-1$. 
We distinguish between the smooth points on an integral curve component of $\sing \F$ and the finite set of singular or isolated points. 
At a smooth point $p$, $u = [x,y,z,0,0\dots, 0]^T$ resolves $\F_p$. 
Let $a_3$ be the $(k-1)\times k$ submatrix of $a$ obtained by deleting the top $3$ rows. 
By Lemma \ref{count}, the space of matrices $\bar a$ with $\rank \bar a_3 \leq k-2$ has codimension $2$, 
hence $\B_1 = \{\phi , p) | \rank \bar a_3 \leq k-2\} \subset V \times \sing \F$ has dimension $\dim V -2 +1$ and so the general map 
$\phi \in V$ yields $a_3$ of rank $k-1$ at all smooth points of $\sing \F$. 
At any smooth point, after choosing bases, we may assume that 
\[
[u,a] = \begin{bmatrix} x & b & 0 & \dots & 0 \\ y & c & 0 &  \dots & 0 \\ z & d & 0 &\dots &0 \\ 0 & 0 & 1 &\dots & 0 \\ \vdots & \vdots & \vdots & \vdots &\vdots \\ 0 & 0 & 0 & \dots & 1 \end{bmatrix}.
\]
In the previous paragraph we saw that $a$ has rank $k$ modulo $\m_p$ for general $\phi$, so one of $b,c,d$ is a unit in $\O_p$. 
If $d$ is a unit, then $\sing \overline\F_p$ is defined by the ideal  $(dx-bz, dy-cz)$, which defines a smooth local surface.

Now consider the finite subset of singular and isolated points, where $\F_p$ is resolved by $u = [x,y, f_3, \dots, f_{r+1}]^T$ and $k=r-1$. 
Let $a_2$ be the $(r-1)\times k$ submatrix of $a$ obtained by removing the top two rows. 
The subspace of all matrices $\bar a$ for which the $k \times k$ submatrix $\bar a_2$ has rank $\leq k-1$ is of codimension $1$ by Lemma \ref{count}. 
Hence the general map $\phi \in V$ yields $\phi_p$ for which $\bar a_2$ has rank $k$ at each of these points, 
so $a_2$ is a nonsingular $k\times k$ matrix, with a unit $d$ for determinant. Since elementary row operations can make the top two rows of $a$ equal to zero, one sees that the Fitting ideal of $\overline \F_p$ is just $(dx + \text{terms in}f_i, dy + \text{terms in}f_i)$, 
which again defines a smooth local surface.

Now we prove (b), so let $k  < r-1$. Since $\sing \overline \F \subset Y \cup \sing \F$ and $\dim Y \leq 1$, 
$\overline \F$ is reflexive by Lemma \ref{MDPremark2.4}. 
The subspace of matrices $\bar a$ such that $\rank \bar a_2 \leq k-1$ is of codimension $(r-1) - (k-1)$ by Lemma \ref{count}, so $\B_2 = \{ (\phi, p) | \rank \bar a_2 \leq k-1\} \subset V \times \sing \F$ has dimension at most $\dim V -(r-k) +1$. 
Since $r-k > 1$, $\B_2$ does not dominate $V$ and the general $\phi \in V$ gives rise to a matrix $\phi_p = a$ for which $a_2$ has a $k\times k$ minor which is a unit $d \in \O_p$. 
When we compute the Fitting ideal of $\overline \F_p$, the $k+1$ minor of $[u,a]$ that uses the first row and the rows of this $k\times k$ minor works out to $dx + \text {terms in } f_i$. 
Likewise the second row gives $dy + \text {terms in } f_i$. 
Since $dx, dy$ are part of a system of parameters for $\O_p$ and $f_i \not \in (x,y)$, it follows that $\overline \F$ is CD2 at $p$. 

It remains to show that the curve components of $\overline \F$ are integral. 
Letting $p \in \sing \F$ be a smooth point on a curve component, $\sing F$ has an ideal of the form $(x,y,z)$ with $x,y,z$ part of a regular sequence of parameters and we may assume that $u = [x,y,z,0,0\dots, 0]^T$ resolves $\F_p$. 
The matrices $a$ for which $\rank \bar a_3 \leq k-1$ has codimension $(r-2) - (k-1)$, so the set 
$\B_3 = \{(\phi,p): \rank \bar a_3 \leq k-1\} \subset V \times \sing \F$ has dimension $\dim V - (r-k-1)+1$.  
If $k < r-2$, this dimension is less than $\dim V$, and hence $\B_3$ does not dominate $V$ and the general $\phi \in V$ has the property that at each smooth integral 
curve point  of $\sing \F$, $\phi(p)$ has the corresponding $\bar a_3$ of rank $k$. 
If $k = r-2$, then $\B_3$ has the same dimension as $V$, and so the fibre over the general $\phi$ in $V$ can have only finitely many 
points in $\B_3$. This means that for the general $\phi$, all but finitely many of the points in $\sing \F$ will yield  a $\phi(p)$ with 
$\rank \bar a_3 = k$. 
Therefore at the general point $p$ of a curve component of $\sing \F$, there is a $k \times k$ minor of $a_3$ with 
determinant $d$ a unit and the Fitting ideal of $\overline \F_p$ will contain $dx, dy, dz$, so at these points, so that 
$\sing \F_p = \sing \overline \F_p$. 
This proves part (b). 
\end{proof}

We strengthen Theorem \ref{rank2} for later use. 

\begin{cor}\label{avoidance}
In Theorem \ref{rank2}, let $A \subset X$ be closed with $A \cap \sing \F = \emptyset$ and $\dim A \leq 1$. 
Then for general $\phi: \E \to F$, the cokernel $\overline \F$ is locally free along $A$. 
\end{cor}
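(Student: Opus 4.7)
\begin{pf}[Proof plan for Corollary \ref{avoidance}]
The plan is to reduce the statement to a dimension count essentially identical to the ones used in the proof of Theorem \ref{rank2}, but now carried out over $A$ instead of over $\sing \F$. The key observation is that since $A \cap \sing \F = \emptyset$, $\F$ is locally free on an open neighborhood of $A$, so at every $p \in A$ both $\E_p$ and $\F_p$ are free of ranks $k$ and $r$. Consequently, the cokernel $\overline \F$ is locally free at $p \in A$ if and only if $\phi_p$ has maximal rank $k$, i.e.\ the matrix representing $\phi_p \otimes k(p) \in \Hom(\E(p), \F(p))$ has full rank $k$.

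First, for each $p \in A$, since $V$ globally generates $\Hom(\E, \F)$, the evaluation map $V \to \Hom(\E(p),\F(p))$ is surjective. By Lemma \ref{count}, the locus of matrices in $M_{r,k}(k(p))$ of rank at most $k-1$ is a subvariety of codimension $(r-k+1)(1) = r-k+1$. Since $k \leq r-1$, this codimension is $\geq 2$. Pulling back via the surjection $V \to \Hom(\E(p),\F(p))$, the locus of $\phi \in V$ with $\phi_p$ not of full rank has codimension $\geq 2$ in $V$.

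Next, form the incidence variety
\[
\mathcal{B} = \{(\phi, p) \in V \times A : \rank (\phi_p \otimes k(p)) \leq k-1\}.
\]
Projecting to $A$, each fiber has dimension at most $\dim V - 2$ by the previous paragraph. Since $\dim A \leq 1$, we conclude $\dim \mathcal{B} \leq \dim V - 2 + 1 = \dim V - 1$. Therefore the first projection $\mathcal{B} \to V$ cannot be dominant, so the general $\phi \in V$ lies outside its image. For such $\phi$, $\phi_p$ has full rank $k$ at every $p \in A$, hence $\overline\F$ is locally free along $A$.

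The argument is routine once one notices that the only obstruction to local freeness of $\overline \F$ at points of $A$ is a rank drop of $\phi$, and that the hypothesis $A \cap \sing \F = \emptyset$ is precisely what lets us apply the simple full rank condition (rather than dealing with the more subtle stratifications encountered at points of $\sing \F$ in the proof of Theorem \ref{rank2}). The only step requiring care is verifying that $V$ still globally generates after restriction; this is immediate from the hypothesis in Theorem \ref{rank2}. No serious obstacle is anticipated.
\end{pf}
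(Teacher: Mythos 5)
Your argument is correct: at a point $p\in A$ the sheaf $\F$ is locally free, so local freeness of $\overline\F$ at $p$ reduces to the fibre map $\phi_p\otimes k(p)$ having full rank $k$ (only this ``if'' direction is needed, so you do not even have to invoke injectivity of $\phi$), and the incidence count $\dim\mathcal B\le\dim V-(r-k+1)+\dim A\le\dim V-1$ shows the bad locus cannot dominate $V$; this uses exactly the surjectivity of $V\to\Hom(\E(p),\F(p))$ coming from global generation and Lemma \ref{count} with codimension $(k-(k-1))(r-(k-1))=r-k+1\ge 2$. The paper proves the corollary by a different, though closely related, route: it gives $A$ the reduced structure, splits it into the isolated/singular points and the smooth irreducible curve components $A_i$, restricts $\F$ to each $A_i$ (where it is a bundle with $\Hom(\E_{A_i},\F_{A_i})$ generated by $V$), and cites the Bertini-type Theorem \ref{FB} to conclude the degeneracy locus of $\phi_{A_i}$ is empty because its expected codimension $r-k+1\ge 2$ exceeds $\dim A_i=1$; it then intersects these finitely many open sets with the open set from Theorem \ref{rank2}. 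Your approach redoes, over $A$, the same dimension count that drives the proof of Theorem \ref{rank2}, which buys a single uniform argument with no need to decompose $A$ or treat points separately; the paper's approach buys brevity by delegating the count to an already quoted theorem. Both are sound; if you keep yours, you may wish to remark (as the paper's proof does implicitly) that one still intersects with the open set of Theorem \ref{rank2} so that the general $\phi$ simultaneously enjoys the conclusions there and local freeness along $A$, since that combination is what later applications use.
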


\begin{proof}
Give $A$ the reduced scheme structure, let $\{p_1, \dots, p_m\}$ be the isolated points and singular 
points of the curve components of $A$, and $A_1, \dots, A_n$ the irreducible smooth curve 
components of $A - \{p_1, \dots, p_m\}$. 
The restriction $\F_{A_1}$ is a vector bundle and $V$ generates the sheaf $\Hom (\E_{A_1}, \F_{A_1})$, so 
by Theorem \ref{FB}, the general map $\phi \in V$ has restriction $\phi_{A_1}$ has empty degeneracy locus, 
meaning that $\overline \F$ is locally free along ${A_1}$: let $V_1 \subset V$ be a Zariski open set of such $\phi$. 
Similarly form Zariski open sets $V_2, \dots V_n$ for each $A_2, \dots A_n$ and $V_{n+1}, \dots V_{n+m}$ 
for each $p_1, \dots p_m$. 
Theorem \ref{rank2} gives a Zariski open set $V_{n+m+1}$ of maps $\phi$ for which 
$\overline \F$ is reflexive \CD2 reflexive with curve components of $\sing \overline \F$ integral. 
Taking $\phi \in \cap_{k=1}^{n+m+1} V_k$ proves the corollary. 
\end{proof}

Now we give a filtered version of Theorem \ref{rank2} (b) which generalizes Theorem \ref{FB} to \CD2 reflexive sheaves 
when $\dim X \leq 4$. 
Let $X$ be a smooth fourfold, 
$\E$ a vector bundle on $X$ with a split filtration by subbundles 
$0 = \E_0 \subset \E_1 \subset \dots \subset \E_n =\E$ and let 
$\F$ be a \CD2 reflexive sheaf on $X$ have a locally split filtration by sheaves 
$0 = \F_0 \subset \F_1 \subset \dots \subset \F_n = \F$. 
Set $\alpha_i = \rk \F_i - \rk \E_i$ for $1 \leq i < n$, $\alpha = \rk \F - \rk \E = 1$ 
and let 
\[
\B = \{\phi \in \Hom (\E, \F): \phi (\E_i) \subset \F_i \text{ for each } 1 \leq i \leq n \}.
\] 

\begin{rmk}\label{filtration}{\em We note two consequences of our hypotheses on the filtrations. 

(a) The splitting of the filtration on $\E$ induces a splitting of $\B$. 
Define $\C_i$ by the split exact sequences $0 \to \mathcal E_i \to \mathcal E_{i+1} \to \mathcal C_{i+1} \to 0$. 
Set $\B_1 = \Hom (\E_1, \F_1) \cong \E_1^\vee \otimes \F_1 \subset \E_1^\vee \otimes \F_2$ and let 
$\pi: \E_2^\vee \otimes \F_2 \to \E_1^\vee \otimes \F_2$ be the natural surjection. 
Take $\B_2 = \pi^{-1} (\B_1)$, the set of homomorphisms $\phi: \E_2 \to \F_2$ such that $\phi (\E_1) \subset \F_1$. 
The splitting of the bottom row of 
\[
\begin{array}{ccccccccc}
0 & \to & \mathcal C_2^\vee \otimes \F_2 & \to & \B_2 & \to & \B_1 & \to & 0 \\
& & || & & \cap & & \cap & & \\
0 & \to & \mathcal C_2^\vee \otimes \F_2 & \to & \E_2^\vee \otimes \F_2 & \to & \E_1^\vee \otimes \F_2 & \to & 0
\end{array}
\]
shows that the top row splits as well, giving $\B_2 \cong \B_1 \oplus (\mathcal C_2^\vee \otimes \F_2)$.
Continuing in this way, we find that $\B \cong \oplus_{i=1}^n (\mathcal C_{i}^\vee \otimes \F_{i})$. 

(b) Let $\Q_i = \F_i/\F_{i-1}$ 
and let the ranks of the sheaves in the 
locally split sequence $0 \to \F_{n-1} \to \F_{n} \to \Q_{n} \to 0$ be $r,s,t$ with $s=r+t$. 
Due to the splitting of stalks at $p \in X$, $(\F_n)_p$ is a free $\O_p$-module if and only if both $(\F_{n-1})_p$ and $(\Q_n)_p$ are. 
On the other hand, if $p \in \sing \F_n$, then the stalk $(\F_n)_p$ is at most $(s+1)$-generated by Proposition \ref{classify}, 
hence $(\F_{n-1})_p$ is at most $r$-generated or 
$(\Q_n)_p$ is at most $t$-generated, so one of $(\F_{n-1})_p$ or $(\Q_n)_p$ is free and the other is \CD2 reflexive  
by Proposition \ref{obvy} (b). 
Therefore $\F_n$ is \CD2 reflexive if and only if both $\F_{n-1}$ and $\Q_n$ are \CD2 reflexive and 
$\sing \F_{n-1} \cap \sing \Q_n = \emptyset$, in which case $\sing \F_n = \sing \F_{n-1} \cup \sing \Q_n$. 
Continuing through the locally split exact sequences, we see that the sheaves $\Q_{i}$ 
are reflexive \CD2 with disjoint singular schemes $C_{i}$ of dimension at most one with integral curve components and 
that $\sing \F_k = \cup_{i=1}^k C_{i}$ for $1 \leq k \leq n$.  
\em}\end{rmk}

\begin{thm}\label{filterreflexive}
Assume $\E$ is a bundle, $\F$ a reflexive \CD2 sheaf with curve components of $\sing \F$ integral, which have split and locally split filtrations as above, and assume 
that $\B$ is generated by a finite dimensional subspace $V \subset H^0 (\B)$. If $\alpha_i \geq 2$ for $1 \leq i < n$, 
then there is an injective map $\phi: \E \to \F$ whose cokernel is the twisted ideal sheaf of a smooth surface. 
If $X$ is projective, this is true of the general map $\phi$. 
\end{thm}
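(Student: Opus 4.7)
The plan is to induct on the length $n$ of the filtration, strengthening the statement to allow arbitrary $\alpha_n$: if $\alpha_n = 1$ the cokernel is the twisted ideal sheaf of a smooth surface (as stated), while if $\alpha_n \geq 2$ the cokernel is \CD2 reflexive with integral curve components in its singular locus (extending Theorem \ref{rank2}(b) to the filtered setting). The base case $n = 1$ is $\B = \Hom(\E,\F)$ globally generated by $V$, to which Theorem \ref{rank2} applies directly, giving case (a) when $\alpha_1 = 1$ and case (b) when $\alpha_1 \geq 2$.

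For the inductive step ($n \geq 2$) I would exploit the splitting $\B \cong \bigoplus_{i=1}^n (\C_i^\vee \otimes \F_i)$ from Remark \ref{filtration}(a), where $\C_i = \E_i/\E_{i-1}$. Apply the strengthened inductive hypothesis to the truncated length-$(n-1)$ filtration on $(\E_{n-1}, \F_{n-1})$: by Remark \ref{filtration}(b), $\F_{n-1}$ is \CD2 reflexive with integral curve singular components; the projection of $V$ onto $H^0(\B_{n-1})$ generates the filtered hom sheaf $\B_{n-1}$ of the truncation; and the top rank-difference is $\alpha_{n-1}\geq 2$. This produces a generic $\psi := \phi|_{\E_{n-1}}$ that is injective with $\overline{\F_{n-1}} := \cok\psi$ being \CD2 reflexive with integral curve singular components. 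A Bertini-type avoidance argument (the filtered analog of Corollary \ref{avoidance}, applied on $X \setminus \sing\F_{n-1}$ where $\F_{n-1}$ is locally free) further ensures that the degeneracy locus of $\psi$ is disjoint from $C_n := \sing\Q_n$, which is already disjoint from $\sing\F_{n-1}$ by Remark \ref{filtration}(b); hence $\overline{\F_{n-1}}$ is locally free along $C_n$.

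Set $\bar\F := \F/\psi(\E_{n-1})$. Because the filtration on $\F$ is locally split and $C_n$ is disjoint from $\sing\F_{n-1}$, $\bar\F$ locally decomposes as $\overline{\F_{n-1}} \oplus \Q_n$, and the argument of Remark \ref{filtration}(b) then shows that $\bar\F$ is \CD2 reflexive with integral curve singular components $\sing\bar\F = \sing\overline{\F_{n-1}} \sqcup C_n$. The map $\phi$ factors through the quotient to give $\bar\phi: \C_n \to \bar\F$ with $\cok\phi = \cok\bar\phi$ and rank difference $\alpha_n = 1$; the image of the $(\C_n^\vee \otimes \F)$-component of $V$ in $H^0(\C_n^\vee \otimes \bar\F)$ generates $\C_n^\vee \otimes \bar\F$ (the latter being a quotient of the former). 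Applying Theorem \ref{rank2}(a) to $\bar\phi$ yields, for generic choice of the top component, a $\bar\phi$ that is injective with cokernel the twisted ideal sheaf of a smooth surface, which is exactly the desired form for $\cok\phi$. When $X$ is projective all of the generic conditions above are Zariski open in $V$, giving the "general $\phi$" statement.

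The main obstacle is the filtered avoidance claim: for a general $\psi$, the degeneracy locus must be disjoint from $C_n$. Corollary \ref{avoidance} handles this in the unfiltered setting, and the filtered version follows by rerunning its proof with the strengthened inductive hypothesis replacing Theorem \ref{rank2} and Theorem \ref{FB} (whose filtered codimension bound $\min\{2\alpha_{n-1}-1, \alpha_{n-1}+2, 4\} \geq 3$ applies on the vector bundle locus $X \setminus \sing\F_{n-1}$) replacing its unfiltered counterpart; since $\dim C_n \leq 1$ and the degeneracy of $\psi|_{C_n}$ has codimension $\geq 3$ there by Theorem \ref{FB}, it is empty for generic $\psi$. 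A secondary technical point is confirming that the local \CD2 reflexive structure of $\bar\F$ together with the integrality of its curve singular components transfers correctly, which is a direct consequence of Remark \ref{filtration}(b) once the local splitting $\bar\F \cong \overline{\F_{n-1}} \oplus \Q_n$ is in hand.
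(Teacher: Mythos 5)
Your induction strips the \emph{top} layer of the filtration, and this forces you to prove a strengthened, higher-corank filtered analogue of Theorem \ref{rank2}(b) for the truncated data $(\E_{n-1},\F_{n-1})$, whose overall corank is $\alpha_{n-1}\geq 2$, together with a filtered avoidance statement. The justification you give for the crucial avoidance claim --- that the degeneracy locus of a general filtered $\psi:\E_{n-1}\to\F_{n-1}$ misses $C_n=\sing\Q_n$ --- does not stand as written: Theorem \ref{FB} is stated only for $\rk\F-\rk\E=1$, so it does not apply to the truncated filtration at all, and the bound $\min\{2\alpha_i-1,\alpha_i+2,4\}$ you invoke controls $\codim_Y\sing Y$, not the codimension of the degeneracy locus $Y$ itself (in Theorem \ref{FB} that codimension is always two, which is useless for avoiding a curve). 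The fact you need --- that for filtered maps with all $\alpha_i\geq 2$ and $\B$ globally generated, the locus of degenerate filtered matrices in a fibre of $\B$ has codimension $\min_i\alpha_i+1\geq 3$, so that a general $\psi$ is nondegenerate along the one-dimensional $C_n$ --- is true, but it is a filtered analogue of Lemma \ref{count}/Corollary \ref{avoidance} that neither the paper's quoted Theorem \ref{FB} nor Corollary \ref{avoidance} supplies, and your strengthened inductive statement would moreover need this avoidance clause built into it for the recursion to close. So the proposal has a genuine gap at exactly this point.

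The paper sidesteps all of this by stripping the \emph{bottom} layer instead: $\B_1=\Hom(\E_1,\F_1)=\C_1^\vee\otimes\F_1$ is an honestly globally generated direct summand of $\B$, so the unfiltered Theorem \ref{rank2}(b) and Corollary \ref{avoidance} apply verbatim to $\phi_1:\E_1\to\F_1$ (taking $A=\bigcup_{k\geq 2}\sing\Q_k$), and passing to the quotient filtration $\overline\E_i=\E_i/\E_1$, $\overline\F_i=\F_i/\E_1$ keeps the overall corank equal to one while the quotient sheaves $\C_i^\vee\otimes\overline\F_i$ remain globally generated by the projections $V_i$; hence no higher-corank filtered Bertini or filtered avoidance lemma is ever needed, only Remark \ref{filtration}(b) to propagate the \CD2{} reflexive structure. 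Either reorganize your induction in this bottom-up way, or supply the filtered dimension count for the avoidance step explicitly; citing Theorem \ref{FB} for it is not sufficient.
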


\begin{proof}
There is an isomorphism $\B \cong \oplus_{i=1}^n (\mathcal C_{i}^\vee \otimes \F_{i})$ by Remark \ref{filtration} (a). 
Taking $V_i$ to be the projection of $V$ to $H^0 (\mathcal C_{i}^\vee \otimes \F_{i})$, we may replace 
$V$ with the possibly larger subspace $V_1 \oplus \dots \oplus V_n \subset H^0 (\B)$.
By Remark \ref{filtration} (b), the quotients $\Q_i$ are reflexive \CD2 with disjoint singular schemes. 

We induct on $n \geq 1$.  
Theorem \ref{rank2} (b) covers the case $n=1$, so assume $n > 1$. 
Since $\alpha_1 \geq 2$, the general map $\phi_1: \E_1 \to \F_1$ is injective with 
quotient $\overline \F_1$ a reflexive \CD2 sheaf with $\sing \overline \F_1$ having integral curve components 
and $\overline \F_1$ is locally free along $\cup_{k=2}^n \sing \Q_k$ by Corollary \ref{avoidance}.   
Taking $\overline \F_i = \F_i/\E_1$ and following the locally split exact sequences 
$0 \to \overline \F_{i-1} \to \overline \F_{i} \to \Q_{i} \to 0$ as in Remark \ref{filtration} (b) 
shows that each $\overline \F_k$ is \CD2 reflexive with $\sing \F_k$ having integral curve components. 
The vector spaces $V_i$ generate the quotient sheaves 
$\C_{i}^\vee \otimes \overline \F_{i}$, so we arrive at the situation of the theorem with $n$ one less with 
the filtrations $\overline \F_i$ and $\overline \E_i = \E_i/\E_1$. 
By induction, there is an injective map $\overline \phi: \overline \E \to \overline \F$ whose cokernel is a twisted ideal 
sheaf of a smooth surface and $\overline \phi$ corresponds to $\phi: \E \to \F$ extending $\phi_1$. 

Now suppose that $X$ is projective. Then each map $\phi:\E \to \F$ gives rise to a complex 
$0 \to \E \otimes \L \stackrel{\phi \otimes 1}{\to} \F \otimes \L \stackrel{\phi^\vee \otimes 1}{\to} \O_X$, 
where $\L = \det \E \otimes \det \F^\vee$. 
The set of $\phi \in H^0 (\Hom (\E,\F))$ where the complex is left-exact is open and defines a flat family of subschemes of $X$. 
Since smoothness is an open condition in the Hilbert scheme of a projective variety, 
we obtain a Zariski open set of maps $\phi$ giving rise to a smooth surface. 

\end{proof}

\begin{rmks}\label{extensions}{\em In case $X = \mathbb P^3$, Martin-Deschamps and Perrin have made a deep study 
of maps $\phi: \E = \oplus \O (-a_i) \to \F$ with $\F$ curvilinear, giving necessary and sufficient conditions for when $\phi$ is injective 
with cokernel the twisted ideal sheaf of a smooth curve \cite[Chapters III and IV]{MDP2}. Our counting arguments for 
Theorem \ref{rank2} becomes easier in this setting because $\sing \F$ is discrete, so the dimension counts can be done one fiber at a time. 
Here are the corresponding statements. 

(a) When $\dim X = 3$, our arguments in Theorem \ref{rank2} show that if $\F$ is a rank $r$ reflexive \CD2 sheaf, $\E$ is bundle of rank $k$ 
and $V \subset H^0 (\Hom (\E, \F))$ globally generates, then the general map $\phi: \E \to \F$ is injective, let $\overline \F = \cok \phi$. 
If $k < r-1$, then $\overline \F$ is reflexive \CD2; if $k=r-1$, then $\overline \F$ is the twisted ideal sheaf of a smooth curve. 

(b) When $\dim X = 3$, our argument for Theorem \ref{filterreflexive} shows that if $\F$ is a rank $r$ reflexive \CD2 sheaf, 
$\E$ a bundle of rank $r-1$ and $\B$ globally generated by a finite dimensional vector space and $\alpha_i \geq 2$ for $1 \leq i < n$, 
then there is an injective map $\phi: \E \to \F$ whose cokernel is the twisted ideal of a smooth curve. 
 
\em}\end{rmks}

\section{Sheaves that are not globally generated}

We give smoothing results for sections of \CD2 reflexive sheaves which are not globally generated. 
When $X = \mathbb P^4$, Theorem \ref{notgg} strengthens Theorem \ref{filterreflexive} to  in a way allow more applications. 
We adopt the following hypothesis.

\begin{hypothesis}\label{setup} 
Let $\G$ be a \CD2 reflexive sheaf of rank $r$ on a smooth fourfold $X$ with singular scheme 
$\sing \G$ having integral curve components and suppose there is an $N$-dimensional 
subspace $V \subseteq H^0 (X,\G)$ for which the cokernel $\Q$ of the evaluation map 
\begin{equation}
V \otimes \O_X \to \G \to \Q \to 0
\end{equation}
is locally principal with $C = \supp \Q$ of dimension $\leq 1$, $C \cap \sing \G = \emptyset$, and 
the curve components of $C$ are integral with 
$\Q$ generically a line bundle along them. 
\end{hypothesis}

The conditions on $\G$ away from $C$ in Hypothesis \ref{setup} are the same as the hypothesis of Theorem \ref{rank2}, 
where we understand $\G$ well. 
The novelty in this section is the analysis of $\G$ near $C$, where $\G$ is locally free. 
The reader may want to take $\G$ a vector bundle and trust that the methods of 
$\S 2$ will work away from $C$. 
 
\begin{lem}\label{two}
With Hypothesis \ref{setup}, let $A \subset X$ be closed with $A \cap (C \cup \sing \G) = \emptyset$ and $\dim A \leq 1$.
If $W \subset V$ is a general subspace of dimension $k < r-1$, then $\phi_W: W \otimes \O_X \to \G$ is injective, 
$\overline \G = \cok \phi_W$ satisfies Hypothesis \ref{setup}, and $\sing \overline \G \cap (C \cup A) = \emptyset$. 
\end{lem}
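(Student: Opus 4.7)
The plan is to split $X$ into the open set $U = X \setminus C$, on which $V$ globally generates $\G|_U$ (since $\supp \Q = C$) and which contains both $A$ and $\sing \G$, and the complement $C$, where $\G$ is locally free (as $C \cap \sing \G = \emptyset$) but $V$ fails to generate. I will verify the conclusions separately on each region, using Theorem \ref{rank2} and Corollary \ref{avoidance} on $U$ and a Grassmannian dimension count along $C$, then intersect the resulting open conditions on $W \in \text{Gr}(k,V)$.

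On $U$, the sheaf $\G|_U$ is \CD2 reflexive of rank $r$ with integral curve components of $\sing \G|_U = \sing \G$, and the subspace $V^{\oplus k} \subset H^0(\Hom(\O_U^k, \G|_U))$ globally generates because $V$ does. Theorem \ref{rank2}(b) applied with $\E = \O_U^k$ and $\F = \G|_U$ then asserts that a general $k$-tuple in $V^{\oplus k}$, equivalently a general $W \in \text{Gr}(k,V)$ equipped with any basis, yields an injective $\phi_W|_U$ whose cokernel $\overline \G|_U$ is reflexive \CD2 with integral curve components of $\sing \overline \G|_U$. Since $A \subset U$, Corollary \ref{avoidance} imposes a further open condition on $W$ forcing $\overline \G$ to be locally free along $A$, so $\sing \overline \G \cap A = \emptyset$.

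At any $p \in C$, the stalk $\G_p \cong \O_p^r$ is free, and the evaluation map gives $V \to \G \otimes k(p) \cong k(p)^r$ with cokernel $\Q \otimes k(p)$. Because $\Q$ is locally principal and $p \in \supp \Q$, Nakayama forces $\dim_{k(p)}(\Q \otimes k(p)) = 1$, so $K_p := \ker(V \to k(p)^r)$ has dimension $N - r + 1$. The map $\phi_W$ is a subbundle injection at $p$ precisely when $W \cap K_p = 0$, and the Schubert locus $\Sigma_p = \{W : W \cap K_p \neq 0\}$ has codimension $r - k$ in $\text{Gr}(k,V)$. Since $k < r - 1$ gives $r - k \geq 2$, sweeping over the at most $1$-dimensional $C$ produces an incidence variety $\{(W,p) : W \cap K_p \neq 0\}$ of dimension at most $\dim \text{Gr}(k,V) - 1$. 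Its image in $\text{Gr}(k,V)$ is therefore a proper closed subset, and for general $W$, $\phi_W$ has rank $k$ at every $p \in C$, making $\overline \G$ locally free of rank $r - k$ in a neighborhood of $C$; in particular $\sing \overline \G \cap C = \emptyset$.

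To finish, set $\overline V$ to be the image of the composition $V \to H^0(\G) \to H^0(\overline \G)$, which factors through $V/W$. The snake lemma applied to
\[
\begin{array}{ccccccccc}
0 & \to & W \otimes \O_X & \to & V \otimes \O_X & \to & (V/W) \otimes \O_X & \to & 0 \\
& & \downarrow \phi_W & & \downarrow & & \downarrow & & \\
0 & \to & \phi_W(W \otimes \O_X) & \to & \G & \to & \overline \G & \to & 0
\end{array}
\]
identifies the cokernel of $(V/W) \otimes \O_X \to \overline \G$, and hence of $\overline V \otimes \O_X \to \overline \G$, with $\Q$, so Hypothesis \ref{setup} holds for $\overline \G$ with the same curve $C$ and cokernel $\Q$. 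The main obstacle is the dimension count along $C$: it is precisely the strict inequality $k < r - 1$ that forces the codimension $r - k$ of each $\Sigma_p$ to strictly exceed $\dim C$, which is what keeps the bad incidence from dominating $\text{Gr}(k,V)$.
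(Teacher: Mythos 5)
Your proposal is correct and follows essentially the same route as the paper's proof: Theorem \ref{rank2}/Corollary \ref{avoidance} handle everything away from $C$, a Schubert-type incidence count over the at most one-dimensional $C$ uses $k<r-1$ to keep the bad locus from dominating the Grassmannian, and the snake lemma identifies the cokernel of the induced evaluation map with $\Q$ so that Hypothesis \ref{setup} passes to $\overline\G$. Your codimension $r-k$ for the locus $\{W: W\cap K(p)\neq 0\}$ is the standard count and suffices exactly as the paper's does, so there is no substantive difference.
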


\begin{proof} Let $K(p) \subset V$ be the kernel of the map $V \to \G (p) = \G_p \otimes k(p)$ for $p \in X - \sing \G$. 
Then $\dim K(p) = N-r$ for $p \not \in C$ and $\dim K(p) = N-r+1$ for $p \in C$. Define 
\[
Z = \{(W, p)\ | W \to \G(p) \text{\ has non-trivial kernel}\} \subset \mathbb G (k,V) \times (X - \sing \G),
\] 
where $(W \subset V) \in \mathbb G (k,V)$. 
For $p \in C$, the fibre $Z_p = \{W \in \mathbb G(k, V) | W \cap K(p) \neq 0\}$ is a Schubert variety of dimension 
$(N-r) + (k-1)(r-k)$, so $Z_p \subset \mathbb G (k,V)$ has codimension 
$N(k-1)+2r-k-rk = (N-r)(k-1)+(r-k) \geq 2$ because of the hypothesis $k < r-1$. 
Therefore $\bigcup_{p \in C} Z_p \subset \mathbb G(k, V)$ is a proper closed set, so the general 
$k$-dimensional subspace $W \in V$ 
yields $\phi_W: W \to \G(p)$ injective for all $p \in C$. 
Therefore $\dim_{k(p)} \cok \phi_W (p) = r-k$ for $p \in C$ and  
$\overline \G$ is a vector bundle on $C$, hence on an open neighborhood of $C$. 
Corollary \ref{avoidance} tells us that $\overline \G$ is a reflexive \CD2 sheaf on $X-C$ with curve components of 
$\sing \overline \G$ integral for general $W$ and that $\sing \overline \G \cap A = \emptyset$. 
Intersecting Zariski open sets of maps in $V$ shows that $\overline \G$ is reflexive \CD2 reflexive with $\sing \overline G$ having integral curve components and locally free in a neighborhood of $C \cup A$ for general $W$.
The map $W \otimes \O_X \to \G$ is injective because its kernel is torsion and contained in $W \otimes \O_X$. 
For the space of sections $\overline V \subset H^0 (\overline \G)$ in Hypothesis \ref{setup}, 
apply the snake lemma to 
\[
\begin{array}{ccccccccc}
0 & \to & W \otimes \O_X & \to & \G & \to & \overline \G & \to & 0 \\
& & || & & \uparrow & & \uparrow & & \\
0 & \to & W \otimes \O_X & \to & V \otimes \O_X & \to & \overline V \otimes \O_X & \to & 0.
\end{array}
\]
\end{proof}

The argument in Lemma \ref{two} uses the assumption $\dim C \leq 1$ from Hypothesis \ref{setup}, not the stronger condition that $\Q$ be a generic line bundle along a smooth curve. The following Lemma uses the full strength of Hypothesis \ref{setup}. 

\begin{lem}\label{one} With hypothesis \ref{setup}, if $\rk \G = 2$ and $\Q$ is a generic line bundle on a smooth curve, 
then a general section $s \in V$ vanishes along a smooth surface. 
\end{lem}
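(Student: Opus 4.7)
My plan is to prove the lemma by combining the generation-based Bertini argument of Section~2 on the open set $U = X \setminus C$ (where $V$ generates $\G$) with a local dimension-count argument along the smooth curve $C$ (where the evaluation map has one-dimensional cokernel). The strategy is to produce a Zariski-open subset of sections $s \in V$ for which the zero-scheme $Y$ of $s$ is smooth both on $U$ and at all points of $Y \cap C$.

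On the open set $U$, the evaluation $V \otimes \O_U \to \G|_U$ is surjective and $\G|_U$ is locally free of rank $2$. Theorem~\ref{rank2}(a), applied with $r=2$ and $k = r-1 = 1$, shows that for $s$ in a Zariski-open set $V_1 \subseteq V$, the cokernel $\cok(s|_U)$ is the twisted ideal sheaf of a smooth surface $Y \cap U$ (if non-empty). For the analysis near $C$, I fix $p \in C$ and choose local parameters $x,y,z,w$ at $p$ with $C$ defined locally by $(x,y,z)$, together with a trivialization $\G_p \cong \O_p e_1 \oplus \O_p e_2$ for which the surjection $\G \to \Q$ becomes projection onto the $e_2$-component followed by reduction modulo $(x,y,z)$. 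Then the image sheaf of $V\otimes \O_X \to \G$ satisfies $\Im_p = \O_p e_1 + (x,y,z) e_2$, and any $s \in V$ writes locally as $s_p = (f,g)$ with $f \in \O_p$ and $g \in (x,y,z)$. The scheme $Y$ at $p$ is cut out by $(f,g)$ and is smooth precisely when $df_p, dg_p \in \m_p/\m_p^2 \cong k^4$ are linearly independent; note that $dg_p$ necessarily lies in the $3$-dimensional subspace spanned by $x,y,z$.

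The central argument is a dimension count on the incidence variety
\[
Z = \{(s,p) \in V \times C : s(p)=0 \text{ in } \G(p) \text{ and } Y \text{ is singular at } p\}.
\]
Since the image of $V \to \G(p) \cong k^2$ equals the one-dimensional subspace $k \cdot e_1$, the condition $s(p) = 0$ cuts out a codimension-$1$ subspace $V_p \subseteq V$. A direct calculation shows that the linearly-dependent locus inside $k^4 \oplus k^3$ has codimension $3$. If the linearization map $V_p \to (\m_p/\m_p^2) \oplus ((x,y,z)/(x,y,z)\m_p)$ has image not contained in the dependent locus---equivalently, if some $s \in V_p$ produces linearly independent $df_p, dg_p$---then the singular locus inside $V_p$ is a proper subvariety, so $\dim Z_p \le N-2$ and $\dim Z \le (N-2) + 1 = N-1 < N$. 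The projection $Z \to V$ is then not dominant, and avoiding its image yields a Zariski-open $V_2 \subseteq V$ for which $Y$ is smooth along $C$; any $s \in V_1 \cap V_2$ then vanishes along a smooth surface.

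The main obstacle is the existence claim in the previous paragraph: for each $p \in C$, I must produce some $s \in V_p$ with linearly independent differentials. The plan is to use Nakayama's lemma to lift the four basic residues $e_1, xe_2, ye_2, ze_2$ spanning $\Im_p/\m_p \Im_p$ to sections $s_1, \dots, s_4 \in V$, and then combine them (together with other sections of $V$ whose stalks at $p$ lie in $\m_p \Im_p$) to engineer an $s \in V_p$ with $(df_p, dg_p)$ linearly independent. The delicate point is that multiplication by local functions is not permitted within the fixed $k$-vector space $V$, so one must exploit the global generation of $\Im$ by $V$ together with the specific local form of $\Im$ forced by $\Q$ being a generic line bundle on the smooth curve $C$.
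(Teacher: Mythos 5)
Your reduction away from $C$ (via Theorem \ref{rank2} applied to $\G$ restricted off $C$) matches the paper, and the incidence-variety framing near $C$ is reasonable; but the step you yourself flag as the ``main obstacle'' is a genuine gap, and in fact the statement you would need is \emph{not} a consequence of Hypothesis \ref{setup}. In your local normal form at $p\in C$ one can choose a basis of $V$ mapping to $e_1$, $f_2e_1+xe_2$, $f_3e_1+ye_2$, $f_4e_1+ze_2$, and $f_ie_1+g_ie_2$ ($i\geq 5$) with $f_i\in\m_p$ and $g_i\in\m_p I_C$, but nothing in the hypotheses controls the $e_1$--components beyond $f_i\in\m_p$. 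If, say, $f_2\equiv x$, $f_3\equiv y$, $f_4\equiv z$ and $f_i\equiv 0$ modulo $\m_p^2$ for $i\geq 5$, then every $s=\sum_{i\geq 2}c_i v_i\in V_p$ has local equations $(F,G)$ with $dF_p=dG_p=c_2\,dx+c_3\,dy+c_4\,dz$, so \emph{no} element of $V_p$ has independent differentials at $p$; this local shape is compatible with the image sheaf being $\O_pe_1+I_Ce_2$, hence with $\Q$ being a line bundle on the smooth curve $C$. A concrete global instance is $\G=\O\oplus\O(1)$ on $\mathbb P^4$ with $V$ spanned by $(1,0),(0,\ell_1),(0,\ell_2),(0,\ell_3)$: the cokernel is a line bundle on the line $C=V(\ell_1,\ell_2,\ell_3)$, yet for every $p\in C$ every $s\in V_p$ has degenerate zero scheme at $p$. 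Since only constant-coefficient combinations of your lifted sections are available, no Nakayama-type engineering can produce the required $s$, and if the bad behaviour occurs along a one-dimensional part of $C$ your incidence set acquires dimension $N$ and the non-dominance argument collapses.

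The paper's proof circumvents exactly this point by never demanding a section whose zero locus is smooth at a prescribed $p$. It works with the universal zero locus $Z=\{(s,q):s(q)=0\}\subset V\times U$ and shows that the \emph{total space} $Z$ is smooth at the single point $(v_2,p)$ of each fiber over $p\in C$: the two equations $F_1=x_1+\sum x_if_i$ and $F_2=x_2y_1+x_3y_2+x_4y_3+\sum_{i\geq5}x_ig_i$ are independent in $\m/\m^2$ there, using only $f_i\in\m_p$ and $g_i\in\m_p I_C$. Because each fiber $Z_p$ over $C$ is an irreducible linear space of dimension $N-1$, this yields $\dim\sing Z\leq N-1$, so the general $s$ avoids $\pi_1(\sing Z)$ and generic smoothness for $Z\to V$ gives smoothness of the general zero scheme on $U$. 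Note how this disposes of the bad configuration above: there the general section simply does not vanish on that portion of $C$, a phenomenon the total-space Bertini argument detects automatically but a fiber-by-fiber count at fixed $p$ cannot. If you want to salvage your outline you must replace the pointwise existence claim by this (or an equivalent) global argument; you would also still need the paper's final reduction handling the finitely many points where $\Q$ fails to be locally free, which your write-up omits.
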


\begin{proof}
First assume $\Q = \L_C$ is a line bundle on a smooth curve $C$. 
Let $U = X - \sing \G$ and define $Z \subset V \times U$ by 
$Z = \{ (s,p)| s(p)=0\}$, where $s(p): V \to \G (p)$ is the map induced by $s \in V$.
Since $V \otimes \O_p \to \G_p$ is surjective for $p \not \in C$, the fiber $Z_p$ of $Z$ over 
$p$ is isomorphic to ${\mathbb A}^{N-2}$, the affine space given by the kernel. 
Therefore $Z \to X$ is a smooth ${\mathbb A}^{N-2}$-bundle of dimension $N+2$ away 
from $\pi_2^{-1} (C)$. 

Now consider $p \in C$, so that $Z_p \subset V$ is a subspace of dimension $N-1$. 
Following Horrocks and Mumford \cite[Proof of Theorem 5.1]{HM}, there is an open affine neighborhood $U^\prime=\Spec R$ of $p$ on which 
$\G$ trivializes as $Re_1 \oplus Re_2$ and $\Q = \mathcal L_C$ as $R/J\ e$, where 
$J = (y_1,y_2,y_3)$ is the ideal of $C$ in $R$. 
After possibly changing free basis for $\G$, we can arrange that the map $\G \to \Q$ is given by 
 $e_2 \mapsto e$ and $e_1 \mapsto 0$ so that the kernel $K$ is $R e_1 \oplus I_C$ generated by 
 $e_1, y_1 e_2, y_2 e_2$ and $y_3 e_2$. 
We can find a basis $v_1, v_2, \dots, v_N$ of $V$ such that $\phi_{U} : V \otimes_k R \to \G_{U^\prime}$ is 
given by $v_1 \mapsto e_1, v_2 \mapsto f_2 e_1+y_1e_2, 
v_3 \mapsto f_3 e_1+y_2 e_2,
v_4 \mapsto f_4 e_1+y_3 e_2$ and $v_i \mapsto f_i e_1+ g_i e_2$ for $5 \leq i \leq N$, 
where $f_i \in \m_p$ and $g_i \in \m_p I_C$. 
Here $\{y_1, y_2, y_3\}$ extend to a regular sequence of parameters for $\O_p$ by appending $y_4$ so that 
$\m_p = (y_1,y_2,y_3,y_4)$. 

Thinking of $V \cong \mathbb A^N$ with coordinate functions $x_i$, a section $s \in V$ can be written 
$s=\sum x_i v_i$ with image in $\G$ over $U^\prime$ being 
\[
x_1 e_1 + x_2 (y_1 e_2 + f_2 e_1) + x_3 (y_2 e_2 + f_3 e_1) + x_4 (y_3 e_2 + f_4 e_1) + \sum_{i=5}^N x_i (g_i e_2 + f_i e_1).
\] 
Therefore the equations for $Z \subset V \times U^\prime$ are  
$$ \begin{aligned} F_1=x_1 +  \sum_{i=2}^N x_i f_i = 0  \\
				F_2=x_2 y_1 + x_3 y_2 + x_4 y_3 + \sum_{i=5}^N x_i g_i = 0.
\end{aligned}$$

The maximal ideal $\m$ of the point $P=(v_2,p) \in V \times \Spec R = \Spec R [x_1, \dots, x_N]$ 
is 
\[
\m = (y_1,y_2,y_3,y_4,x_1,x_2-1,x_3, \dots, x_N)
\]
with system of parameters shown. Clearly $F_1, F_2$ are linearly independent in $\m/\m^2$, hence 
$P$ is a smooth point of $Z$, so that 
$\dim \sing Z \cap Z_p \leq N-2$. Therefore $\dim \sing Z \leq N-1$ and $\pi_1 (\sing Z) \subset V$ 
is proper. 
By generic smoothness, the general fibre of $Z \subset V \times U \to V$ is smooth, which gives the smooth zero locus of  general section in $V$ along $U$. 
Applying Theorem \ref{rank2} (b) to the restriction of $\G$ to $X - C$ shows that the cokernel of 
$W \otimes \O_X \to \G$ has the same property away from $C$. 
Intersecting these Zariski open conditions in $V$ gives the conclusion on all of $X$. 

Now suppose that $\Q$ is a line bundle on a smooth curve except for finitely many points $\{p_1, \dots, p_n\}$.
Since $\Q_{p_i}$ is generated by one element, a general section $s$ doesn't vanish at the $p_i$ and we can apply the argument above 
on the open set $U = X - \{p_1, \dots, p_n\}$. 
\end{proof}

Corollary \ref{P4} extends Theorem \ref{rank2} when $\E$ is a direct sum of line bundles and $X = \mathbb P^4$. 

\begin{cor}\label{P4}
Assume Hypothesis \ref{setup} with $X = \mathbb P^4$ and 
$A \subset X$ closed with $\dim A \leq 1$ and $A \cap (C \cup \sing G) = \emptyset$. 
If $0 \leq a_1 \leq \dots \leq a_m$ and $m < r$, then there is an injective map 
$\phi: \oplus_{i=1}^m \O (-a_i) \to \G$ such that 
\begin{enumerate}
\item[(a)] If $m < r-1$, then $\overline \G = \cok \phi$ is a reflexive \CD2 sheaf of rank $r-m$, which is locally free along $C \cup A$. 
Moreover, there exists $\overline V \subset H^0 (\overline \G (a_m))$ such that $\overline V \otimes \O_X \to \overline \G (a_m) \to \Q (a_m) \to 0$ is exact.
\item[(b)] If $m= r-1$, then $\overline \G = \cok \phi$ is the twisted ideal sheaf of a smooth surface. 
\end{enumerate}
\end{cor}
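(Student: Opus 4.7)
The plan is to prove the corollary by induction on $m$, building $\phi$ one summand at a time. Set $a_0 = 0$ by convention, $\G^{(0)} = \G$, and $V^{(0)} = V$. For each $1 \leq i \leq m$ we maintain the inductive invariant: there is a partial injection
$\phi^{(i)} : \bigoplus_{j=1}^{i} \O(-a_j) \to \G$ with cokernel $\G^{(i)}$, and there is a finite-dimensional subspace $V^{(i)} \subset H^0(\G^{(i)}(a_i))$ fitting in an exact sequence
\[
V^{(i)} \otimes \O_X \to \G^{(i)}(a_i) \to \Q(a_i) \to 0,
\]
with $\G^{(i)}$ a reflexive \CD2 sheaf of rank $r-i$ whose singular scheme has integral curve components and satisfies $\sing \G^{(i)} \cap (C \cup A) = \emptyset$. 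In particular, the pair $(\G^{(i)}(a_i), V^{(i)})$ will satisfy Hypothesis \ref{setup} on $\mathbb P^4$ with the same curve $C$ and the same closed set $A$. The base case $i=0$ is Hypothesis \ref{setup} applied to the original data.

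To pass from stage $i-1$ to stage $i$, first promote $V^{(i-1)} \subset H^0(\G^{(i-1)}(a_{i-1}))$ to a space of sections of $\G^{(i-1)}(a_i)$. Since $a_i \geq a_{i-1} \geq 0$, twisting the stage $i-1$ exact sequence by $\O(a_i - a_{i-1})$ and using that this line bundle is globally generated on $\mathbb P^4$, the image $W$ of the multiplication map
\[
V^{(i-1)} \otimes H^0(\O_{\mathbb P^4}(a_i - a_{i-1})) \to H^0(\G^{(i-1)}(a_i))
\]
yields an exact sequence $W \otimes \O_X \to \G^{(i-1)}(a_i) \to \Q(a_i) \to 0$, so $(\G^{(i-1)}(a_i), W)$ satisfies Hypothesis \ref{setup}. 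When the current rank $r-(i-1)$ exceeds $2$ (which holds for every $i \le m$ in case (a), since $m < r-1$, and for every $i \le r-2$ in case (b)), apply Lemma \ref{two} to this pair with $k=1$ and closed set $A$: a general line $W_1 \subset W$ yields an injection $\O_X \to \G^{(i-1)}(a_i)$, equivalently a map $\O(-a_i) \to \G^{(i-1)}$. Composing with $\phi^{(i-1)}$ gives $\phi^{(i)}$, and Lemma \ref{two} guarantees that its cokernel $\G^{(i)}$ is reflexive \CD2 with integral singular curve components, locally free along $C \cup A$. Finally, $V^{(i)} := W/W_1$ surjects onto $\Q(a_i)$ by the snake lemma, closing the induction. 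After $m$ steps, part (a) follows with $\overline \G = \G^{(m)}$ and $\overline V = V^{(m)}$.

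For part (b), carry out the induction above for $i = 1, \dots, r-2$ using Lemma \ref{two}. At the final stage $i = r-1$ the sheaf $\G^{(r-2)}$ has rank $2$, and $(\G^{(r-2)}(a_{r-1}), W)$ satisfies Hypothesis \ref{setup}; in particular $\Q(a_{r-1})$ is generically a line bundle on the smooth curve $C$. Lemma \ref{one} produces a general section in $W$ vanishing along a smooth surface $Y$, yielding an exact sequence $0 \to \O_X \to \G^{(r-2)}(a_{r-1}) \to \I_Y \otimes \L \to 0$ for some line bundle $\L$. Untwisting by $\O(-a_{r-1})$ and composing with $\phi^{(r-2)}$ yields the required injection $\phi : \bigoplus_{i=1}^{r-1} \O(-a_i) \to \G$ with cokernel $\I_Y \otimes \L(-a_{r-1})$, the twisted ideal sheaf of the smooth surface $Y$.

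The main obstacle is verifying that the inductive invariant truly propagates: one must check that the snake-lemma construction of $V^{(i)}$ preserves surjectivity onto $\Q(a_i)$, that the closed set $A$ continues to satisfy the disjointness hypothesis of Lemma \ref{two} at each stage (it does, since $A$ is fixed and $\sing \G^{(i)} \cap A = \emptyset$ by the invariant), and that the Zariski-open conditions from successive applications of Lemmas \ref{two} and \ref{one} (injectivity of each added column, reflexivity and \CD2-ness of the cokernel, integrality of its singular curve components, local freeness along $C \cup A$, and smoothness of $Y$ in the final step) can all be realized simultaneously. Since only finitely many nonempty open conditions on a connected parameter space are involved, a general sequence of choices $(W_1^{(1)}, \dots, W_1^{(m-1)}, s)$ produces a $\phi$ with all the required properties.
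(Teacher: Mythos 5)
Your proposal is correct and follows essentially the same route as the paper: induct on the number of line-bundle summands, use the multiplication map by sections of $\O(a_i-a_{i-1})$ (resp. $\O(a_1)$ in the paper) to transport the distinguished space of sections to the twisted quotient so that Hypothesis \ref{setup} persists, apply Lemma \ref{two} at each intermediate (rank $>2$) step to keep the cokernel reflexive \CD2{} and locally free along $C\cup A$, and finish the $m=r-1$ case with Lemma \ref{one}. The only cosmetic difference is that the paper splits off the first summand and recurses while you unroll the induction one column at a time, which is the same argument.
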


\begin{proof}
We induct on $m$. For $m=1$, notice that $\G (a_1)$ is a \CD2 reflexive sheaf of rank $r$ with $\sing \G (a_1) = \sing \G$. 
The natural surjection 
$H^0 (\O (a_1)) \otimes \O \to \O (a_1)$ gives 
\begin{equation}\label{diag1}
\begin{array}{cccccccc}
V \otimes H^0 (\O (a_1)) \otimes \O & \to & \G (a_1) & \to & \mathcal Q_1 & \to &  0 \\
\downarrow & & \downarrow &  & \downarrow & &  &  \\
V \otimes \O (a_1) & \to & \G (a_1) & \to & \Q (a_1) & \to  & 0. 
\end{array}
\end{equation}
The vertical arrow on the left is surjective, hence the scheme-theoretic images of the bundles on the left are the same in 
$\G (a_1)$ and the vertical map on the right is an isomorphism, so 
$\Q_1 = \Q (a_1)$ is a generic line bundle on the smooth curve $C$ and we take $V_1 \subset H^0 (\G (a_1))$ to be the image of 
$V \otimes H^0 (\O (a_1))$. 
Lemma \ref{two} or Lemma \ref{one} shows that the general map 
$\O \to \G (a_1)$ has cokernel as stated.

Now suppose $m>1$. Taking $V_1 \subset H^0 (\G (a_1))$ as above, apply 
Lemma \ref{two} to $\G (a_1)$ to see that a general section  $\O \to \G (a_1)$ has a reflexive \CD2 
quotient $ \F (a_1)$ of rank $r-1$ and locally free on $C \cup A$. 
Then $\overline V_1 \subset H^0 ( \F (a_1))$ has the property that
the cokernel of $\overline V_1 \otimes \O \to  \F (a_1)$ is $\Q (a_1)$. 
The induction hypothesis gives an injective map $\phi^\prime: \oplus_{i=2}^m \O (a_1-a_i) \to \F(a_1)$ to obtain a quotient as 
in statements (a) or (b). Then twist and combine with the section $\O \to \G (a_1)$ to obtain a map $\phi$ with 
the properties stated. 
\end{proof}

\subsection{The canonical filtration and filtered Bertini theorem}

Take $\G$ a rank $r$ reflexive \CD2 sheaf on $X = \mathbb P^4$ as in Hypothesis \ref{setup} and $\Q$ a generic line bundle on a smooth curve. Further assume $H^0 (\G (-1))=0$ and  
consider maps $\phi: \E \to \F$ where $\E$ is a direct sum of line bundles and $\F$ is a direct sum of $\G$ with lines bundles. 
We order the summands so that 
\[
\E = \bigoplus_{i=1}^k \O (-a_i) \text{ and } \F = \bigoplus_{j=1, j \neq M}^N \O (-b_j) \oplus \G,
\]
where the $a_i, b_j$ are non-decreasing and $b_j < 0$ for $1 \leq j < M$, $b_j \geq 0$ for $M < j \leq N$. 
Set $b_M = 0$ so that the $b_i$ are non-decreasing and order the summands of $\F$ by 
\[
\K_1 = \O (-b_1), \K_2 = \O (-b_2), \dots \K_M = \G, \K_{M+1} = \O (-b_{M+1}), \dots \K_{N} = \O (-b_{N}).
\]
As in \cite[Example 2.1]{C}, we define a canonical filtration: 
\begin{defn}\label{CF}{\em 
Let $\F_1 = \oplus_{b_j \leq a_1} \K_j = \oplus_{j=1}^{m_1} \K_j$ and set 
 $r_1 = \min\{r: b_{m_1+1} \leq a_{r+1} \}$ and let $\E_1 = \oplus_{i =1}^{r_1} \O (-a_i)$. 
In a similar vein, we next set $\F_2 = \oplus_{b_j \leq a_{r_1+1}} \K_j = \oplus_{j=1}^{m_2} \K_j$, 
$r_2 = \min\{r: b_{m_2+1} \leq a_{r+1}\}$, $\E_2 = \oplus_{i =1}^{r_2} \O (-a_i)$ and continue. 
This gives filtrations $0 = \E_0 \subset \E_1 \subset \dots \subset \E_n = \E$ and 
$0 = \F_0 \subset \F_1 \subset \dots \subset \F_n = \F$. 
Set $\alpha_i = \rk \F_i - \rk \E_i$ for $0 < i < n$ and $\alpha = \rk \F - \rk E$. 
\em}\end{defn}

The subsheaf $\B = \{\phi: \E \to \F: \phi (\E_i) \subset \F_i, 1 \leq i \leq n \} \subset \Hom(\E,\F)$
has a direct sum decomposition $\B \cong \oplus (\C_i^\vee \otimes \F_i) = \oplus \B_i$ as in Remark \ref{filtration}, but if 
$\G$ is a summand of $\F_i$, then $\B_i$ may fail to be globally generated. 

\begin{thm}\label{notgg} 
In the setting above, if $\alpha_i \geq 2$ for $0 < i < n$ and $\alpha = 1$, then the general map 
$\phi: \E \to \F$ is injective and $\cok \phi$ is the twisted ideal sheaf of a smooth surface. 
\end{thm}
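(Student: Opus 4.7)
The plan is induction on the length $n$ of the canonical filtration, following the template of Theorem \ref{filterreflexive} but importing the results of Section 3 (especially Corollary \ref{P4}) at the steps where $\G$ enters the filtration. By Remark \ref{filtration}(a), the sheaf $\B$ splits as $\bigoplus_{i=1}^n \C_i^\vee \otimes \F_i$. Each summand decomposes further into line bundles $\O(a_j - b_k)$ with $a_j - b_k \geq 0$ (this nonnegativity is the point of the canonical filtration: $b_k \leq a_{r_{i-1}+1} \leq a_j$ whenever $\O(-a_j)$ appears in $\C_i$ and $\O(-b_k)$ in $\F_i$), together with a possible copy of $\C_i^\vee \otimes \G$ when $\G \subset \F_i$. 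The line bundle part is globally generated, while the $\G(a_j)$ summands are controlled by the subspace $V \otimes H^0(\O(a_j)) \subset H^0(\G(a_j))$ from Hypothesis \ref{setup}, whose evaluation cokernel $\Q(a_j)$ remains a generic line bundle on the smooth curve $C$.

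For the base case $n = 1$, we have $\alpha = 1$, $\E = \bigoplus \O(-a_i)$, and $\F = \F_1$. If $\G$ is not a summand of $\F_1$, then $\Hom(\E, \F)$ is globally generated and Theorem \ref{FB} applies directly. Otherwise $\F_1 = \L \oplus \G$ with $\L$ a direct sum of line bundles; here I would combine a Bertini argument for the globally generated line-bundle part of $\Hom(\E, \F)$ with Corollary \ref{P4}(b) for the map into $\G$, choosing $\phi$ so that the $(r-1) \times (r-1)$ minors that cut out the rank-drop locus reduce to the minors arising from the smooth surface produced by Corollary \ref{P4}(b). The line-bundle components of $\phi$ serve to clear away unwanted extraneous rank drops via a Bertini-genericity argument on the globally generated summands of $\B$.

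For the inductive step ($n > 1$), I would build $\phi_1: \E_1 \to \F_1$ as above but using Corollary \ref{P4}(a) so that $\F_1/\phi_1(\E_1)$ is a reflexive \CD2 sheaf rather than a twisted ideal sheaf; set $\overline \F_i = \F_i/\phi_1(\E_1)$ and $\overline \E_i = \E_i/\E_1$; and invoke the inductive hypothesis on the shortened $(n-1)$-step filtration (now with $\overline \alpha = 1$ and the old $\alpha_i \geq 2$ for $1 \leq i < n-1$). The main obstacle is verifying that $\overline \F$ continues to satisfy Hypothesis \ref{setup}: that it is reflexive \CD2 with integral curve components of $\sing \overline \F$, and that an induced subspace of sections $\overline V \subset H^0(\overline \F)$ still has evaluation cokernel equal to $\Q$, generically a line bundle on the smooth curve $C$. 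The avoidance statements in Corollary \ref{avoidance} and Lemma \ref{two} let us choose $\phi_1$ so that $\overline \F$ is locally free near $C$, and the snake-lemma diagram used in the proof of Lemma \ref{two} and Corollary \ref{P4} then transports $V$ to the required $\overline V$, so Hypothesis \ref{setup} persists and the induction closes.
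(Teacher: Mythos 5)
Your overall architecture is the paper's: induct on the length of the canonical filtration, peel off a first map $\phi_1:\E_1\to\F_1$, pass to the quotient filtration $\overline\F_i=\F_i/\E_1$, $\overline\E_i=\E_i/\E_1$, and keep the induction alive by checking that the quotients remain CD2 reflexive, locally free near $C$ (Corollary \ref{avoidance}, Lemma \ref{two}), and still carry a space of sections whose evaluation cokernel is $\Q$ (snake lemma); that part is right and matches the paper. The genuine gap is in how you execute a filtration step whose target mixes line bundles with $\G$ --- and this step is unavoidable: it is the whole base case $n=1$ (where $\F_1=\F\supset\G$), and it is the first step whenever $\G$ already sits in $\F_1$. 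There you propose to run Bertini on the globally generated line-bundle block and Corollary \ref{P4}(b) on the $\G$-block, ``choosing $\phi$ so that the minors cutting out the rank-drop locus reduce to the minors from the smooth surface in $\G$.'' As stated this is not a proof: the degeneracy locus of $\phi_1:\E_1\to\L\oplus\G$ is cut out by the maximal minors of the full matrix, not by minors of any sub-map into $\G$, and the line-bundle components cannot ``clear away extraneous rank drops'' by genericity, because the projection $\E_1\to\L$ itself drops rank along a locus of expected codimension $r=\rk\G$; for $r\le 4$ (e.g.\ $r=2$ in the Horrocks-Mumford application) this is a nonempty surface, exactly the dimension of interest, so the two blocks interact and no Bertini argument on the globally generated part decouples them.

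The repair is the paper's one-line observation, which is implicit in your first paragraph but not used where it is needed: when $\G$ is a summand of $\F_1$ the construction forces $a_1\ge 0$ (since $b_M=0\le a_1$), so after twisting by $a_1$ one may assume $a_1=0$; then $\F_1$ is a direct sum of globally generated line bundles and $\G(a_1)$, hence is itself a CD2 reflexive sheaf with the same singular scheme as $\G$, and the space of sections $H^0(\text{line-bundle part})\oplus \mathrm{im}\bigl(V\otimes H^0(\O(a_1))\bigr)$ has evaluation cokernel $\Q(a_1)$, still a generic line bundle on $C$. In other words $\F_1$ (suitably twisted) satisfies Hypothesis \ref{setup}, so Corollary \ref{P4}(a) or (b) applies verbatim with $\F_1$ in the role of $\G$, with no separate minor bookkeeping; this is exactly how the paper handles both the base case and the $t=1$ first step (the $t>1$ first step being Corollary \ref{avoidance}, as you say). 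With that substitution your induction closes as in the paper; you should also add the closing observation (as at the end of Theorem \ref{filterreflexive}) that on projective $X$ the set of $\phi$ yielding a smooth surface is open, so that ``some $\phi$'' upgrades to ``the general $\phi$,'' which is what the statement asserts.
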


\begin{proof} 
We induct on $n$. If $n=1$, 
then Corollary \ref{P4} (b) applies to $\E = \E_1$ and $\F = \F_1$, so we assume $n>1$. 
Letting $t$ be the smallest integer for which $\G$ is a summand of $\F_t$, there are we two cases. 

If $t > 1$, then $\Hom (\E_1, \F_1)$ is globally generated, then Corollary \ref{avoidance} gives an 
injective map $\phi_1: \E_1 \to \F_1$ with \CD2 reflexive cokernel $\overline \F_1$ and locally free along $\cup_{k=2}^n \sing \Q_k \cup C$, 
where $\Q_k = \F_k / \F_{k-1}$.  
Define new filtrations by $\overline \F_k = \F_k / \E_1$ and $\overline \E_k = \E_k / \E_1$. 
The exact sequence $0 \to \overline \F_1 \to \overline \F_2 \to \Q_2 \to 0$ splits, so $\overline \F_2$ is 
reflexive \CD2 by Remark \ref{filtration} and following the split sequences shows this to be true of 
all the $\overline \F_k$ including $\overline \F = \overline \F_n$. 
For this new filtration, $\overline \B_k = \C_k^\vee \otimes \overline \F_k$ is globally generated for $k < t$ and for 
$k \geq t$ there is a space of sections $\overline V$ for $\overline \F_k$ for which the evaluation map 
$\overline V \otimes \O_X \to \overline \F_k$ is $\Q$, because one can take the sum of such sections from $\Q_k$ and 
sections that globally generate $\overline \F_{k-1}$. Thus the induction continues. 

If $t = 1$, then $a_1 \geq 0$ and $\G$ is a summand of $\F_1$. 
We equivalently consider maps $\E (a_1) \to \F (a_1)$ to reduce to the case $a_1 = 0$. 
This is possible because $\G (a_1)$ has a space of sections $V_1$, namely the image of $V \otimes H^0 (a_1)$ in $H^0 (\G (a_1))$, such that 
the cokernel of $V_1 \otimes \O_X \to \G (a_1)$ is $\Q (a_1)$. 
Since $\F_1 (a_1)$ is the direct sum of $\G (a_1)$ and globally generated line bundles, $\F_1 (a_1)$ also has such a space of sections. 
Corollary \ref{P4} gives an injective map 
$\phi_1: \E_1 \to \F_1$ with \CD2 reflexive cokernel $\overline \F_1$ which is locally free along $\cup_{k=2}^n \sing \Q_k \cup C$. 
The filtration $\overline \F_k = \F_k / \E_1$ consists of \CD2 reflexive sheaves and the induction continues. 

The argument in Theorem \ref{filterreflexive} shows that $\phi$ may be taken general since $X = \mathbb P^4$ is projective. 
\end{proof}

\begin{ex}\label{explicitA}{\em
We illustrate the proof with a concrete example. 
Let $\G$ be the Horrocks-Mumford bundle \cite{HM} on $\mathbb P^4$, thus $V = H^0(\G)$ is $4$-dimensional and 
the cokernel of the evaluation map $V \otimes \O_X \to \G$ is a line bundle on a smooth curve $C$ consisting of $25$ lines (see Example \ref{HMeven}). 
The general section of $V$ vanishes along an abelian surface $S \subset \P^4$ of degree ten. 
Consider a general map $\phi: \E \to \F$ where 
\[
\E = \O (3) \oplus \O^2 \oplus \O (-1) \oplus \O (-4)^2 \oplus \O (-5) \text{ and } \F = \O(5) \oplus \O (4)^2 \oplus \O (1)^2 \oplus \G \oplus \O (-3).
\] 
The canonical filtration is given by $\E_1 = \O (3), \E_2 = \E_1 \oplus \O^2 \oplus \O(-1), \E_3 = \E$
and $\F_1 = \O (5) \oplus \O (4)^2, \F_2 = \F_1  \oplus \ \O (1)^2  \oplus \G, \F_3 = \F$. 
Thus $\alpha_1 = 2$ and $\B_1 = \Hom (\E_1, \F_1)$ is globally generated, so by Corollary \ref{avoidance} 
the cokernel $\overline \F_1$ of the general map $\phi_1: \E_1 \to \F_1$ is a \CD2 reflexive sheaf which is locally free along $C$. 
Taking the quotient of the filtrations by $\E_1$ gives new filtrations $\overline \E_2 \subset \overline \E_3$ 
and $\overline \F_2 \subset \overline \F_3$ where the $\overline \E_i$ are direct sums of line bundles and the 
$\overline \F_i$ are \CD2 reflexive sheaves. 
Here $\overline \F_2 = \overline \F_1 \ \oplus \ \O (1)^2  \oplus \G$ has a space $V_1$ of sections for which the cokernel of the evaluation map 
$V_1 \otimes \O_X \to \overline \F_2$ is a line bundle on the smooth curve $C$, namely the sum of global sections generating 
$\overline \F_1$ and $V$. This illustrates the first case in the proof. 

The sheaf $\overline \B_2 = \Hom(\overline \E_2, \overline \F_2)$ is not globally generated, but using the space $V_1$ of sections noted above, Corollary \ref{P4} gives a map $\phi_2: \overline \E_2 \to \overline \F_2$ with \CD2 reflexive cokernel 
$\tilde \F_2$ which is locally free along the union of $C$ an the singular scheme of $\overline \F_3$, hence if we quotient by $\overline \E_2$, 
$\tilde \F_3$ is a \CD2 reflexive sheaf and $\tilde \E_3$ is a direct sums of line bundles. Since $\tilde \F_3$ has a good space of sections, 
the cokernel of a general map $\phi_3: \tilde \E_3 \to \tilde \F_3$ is the twisted ideal sheaf of a smooth surface. 
\em}\end{ex}

\begin{rmk}\label{extension2}{\em
As in Remarks \ref{extensions}, Corollary \ref{P4} and Theorem \ref{notgg} can be modified for curves in $\mathbb P^3$ with easier proofs since $\dim \sing \F = 0$. 
Martin-Deschamps and Perrin have done an exhaustive study of this situation \cite{MDP2}. 
\em}\end{rmk}

\section{Applications to linkage theory}

We apply our results to smoothing members of even linkage classes of codimension two subschemes in $\mathbb P^3$ and $\mathbb P^4$. 
To make the connection to linkage theory transparent, we restrict to Hypothesis \ref{setup2} 
where the condition for smoothing in Theorem \ref{FB} and the necessary condition for integrality in \cite{Nint} coincide. 
Theorem \ref{toy} yields even linkage classes of curves in $\mathbb P^3$ and surfaces in $\mathbb P^4$ in which every integral subscheme is smoothable within its even linkage class. 
In particular, this phenomenon holds for the even linkage classes assoicated to the 
Horrocks-Mumford surface in $\mathbb P^4$ (Examples \ref{HMeven} and \ref{HModd}). 

Recall linkage theory \cite{JM, PS}. 
Codimension two subschemes $X,Y \subset \mathbb P^d$ are \textit{simply linked} if their scheme-theoretic union is a complete intersection. 
They are \textit{evenly linked} if there is a chain $X=X_0, X_1, \dots, X_{2n}=Y$ with $X_i$ simply linked to $X_{i+1}$. 
Clearly even linkage forms an equivalence relation and there is a bijection between even linkage (equivalence) classes $\L$ of locally Cohen-Macaulay subschemes in $\mathbb P^d$ and stable equivalence classes of vector bundles $\N$ on $\mathbb P^d$ satisfying $H^1_* (\N^\vee)=0$ \cite{R2}. 
If a minimal rank element $\N_0$ of the stable equivalence class corresponding to $\L$ via \cite{R2} is zero, 
then $\L$ is the class of ACM codimension two subschemes and we understand which classes contain integral or smooth connected subschemes \cite{Nsecant, steffen}, 
so we henceforth assume $\N_0 \neq 0$. 
Then $\N_0$ is unique up to twist and $\L$ has a minimal element $X_0$ in the sense that each 
$X \in \L$ is obtained from $X_0$ by a sequence of basic double links followed by a cohomology preserving deformation through subschemes in $\L$ \cite{BBM, LR, MDP1, Nlink}, 
where a \textit{basic double link} of $X$ has form $Z = X \cup (H \cap S)$, $S$ a hypersurface containing $X$ and $H$ is a hyperplane meeting $X$ properly. 
Each minimal element $X_0$ has a resolution of the form 
\begin{equation}\label{minimalres}
0 \to \oplus \O(-l)^{p_0(l)} \stackrel{\phi_0}{\to} \N_0 \to \I_{X_0} (a) \to 0
\end{equation}
where $p_0: \mathbb Z \to \mathbb N$, $\sum p_0(l) = \rank \N_0 - 1$ and $a \in \Z$ (there is an 
algorithm to compute $p_0$ and $a$ from $\N_0$ \cite{MDP1,Nlink}). 
Each $X \in \L$ has a resolution of the form 
\begin{equation}\label{Ntype}
0 \to \mathcal P \to \N_0 \oplus \Q \to \I_X (a+h) \to 0
\end{equation}
where $\mathcal P, \Q$ are direct sums of line bundles and $h \geq 0$ is the \textit{height} of $X$. 
Conversely, any codimension two $X \subset \mathbb P^d$ with resolution (\ref{Ntype}) is in $\L$. 
The subset $\L_h \subset \L$ of height $h$ elements is a disjoint union of finitely many irreducible sets $H_i$ determined by 
the values of $h^0 (\I_X (t)), t \in \mathbb Z$ for some $X \in H_i$, so we denote these $H_X$. They are locally closed subsets of the Hilbert scheme consisting 
of subschemes in $\L$ with constant cohomology \cite{BM2, Nint, Nfix} (see \cite[$\S$ VII]{MDP1} for space curves). 
Each $Y \in H_i = H_X$ has the same resolution (\ref{Ntype}) as $X$ modulo adding/subtracting the same line bundle summands to $\mathcal P$ and $\Q$. 

To index the cohomology preserving deformation classes $H_X$ of 
$X \in \L$ with $X_0$ minimal, define $\eta_{X}: \mathbb Z \to \mathbb Z$ 
by $\eta_X (l) = \Delta^n h^0 (\I_X (l)) - \Delta^n h^0 (\I_{X_0} (l-h_X))$, where $h_X$ is the height of $X$ \cite[1.15 (b)]{Nint}. 
The function $\eta_X$ satisfies (a) $\eta_X (l) \geq 0$ for $l \in \mathbb Z$, (b) $\sum \eta_X (l) = h_X$, and 
(c) $\eta$ is connected in degrees $< s_0 ({X_0}) + h_X$, where $s_0 (X)$ is the least degree of a hypersurface contiaing $X$, by
\cite[1.8]{Nint}. 
Setting $\inf \eta_X = \min\{l: \eta_X (l) > 0\}$, 
the connectedness condition says that $\eta_X (l) > 0$ for $\inf \eta_X \leq l < s_0 ({X_0}) + h_X$, so the function 
\[
\theta_X (l) = \left\{\begin{array}{ll} \eta_X (l) - 1 & \inf \eta_X \leq l < s_0 ({X_0}) + h_X \\
\eta_X (l) & \text{otherwise}
\end{array}\right.
\]
is non-negative. 

The usefulness of the function $\theta_X$ comes from the fact that if $X$ is integral, 
then $\theta_X$ is connected about $s_0 (X_0) + h_X$; conversely, if $X_0$ is integral and $\theta_X$ is connected about 
$s_0 (X_0)+h_X$, then $X$ deforms to an integral element in $\L$ \cite{Nint, Nfix}. 
We identify a simplified setting where this connectedness condition on $\theta_X$ for integrality for $X$ lines up with 
the condition $\alpha_i \geq 2$ in Theorem \ref{notgg}. 

\begin{hypothesis}\label{setup2} 
Let $\N_0$ correspond to even linkage class $\L$ on $\mathbb P^d$ and suppose that $\G$ is a quotient sheaf of $\N_0$ by a direct sum of line 
bundles giving an exact sequence 
\begin{equation}\label{g}
0 \to \O^{r-1} \to \G \to \I_{X_0} (a) \to 0
\end{equation}
with $X_0$ minimal in $\L$ and $s_0 (X_0) = a$. 
\end{hypothesis}

\begin{rmk}{\em
Hypothesis \ref{setup2} is a strong condition on an even linkage class. 
In trying to understand the linkage theory of the even linkage class of the Horrocks-Mumford 
surface in $\mathbb P^4$ we observed that these conditions hold and that there are
plenty of other examples where it holds as well. 
Under Hypothesis \ref{setup2} we will see in Proposition \ref{equivalence} a nice correspondence between the 
condition $\alpha_i \geq 2$ from the canonical filtration for $X \in \L$ and the invariant $\theta_X$. 
The condition $s_0 (X_0)=a$ is crucial for this connection to hold. 
In practice the sheaf $\G$ often satisfies Hypothesis \ref{setup} as well, in which case 
$X_0$ links directly to a minimal element $X_0^*$ in the dual linkage class by hypersuraces of degree $a$.
In Examples \ref{curve} and \ref{surface} we will see cases where $\G = \N_0$ and sequence (\ref{g}) is sequence (\ref{minimalres}), but will use the flexibility offered by $\G$ being a proper quotient of $\N_0$ in Examples \ref{HMeven} and \ref{HModd}.
\em}\end{rmk}

A codimension two subscheme $X \subset \mathbb P^d$ having a resolution of the form 
\begin{equation}\label{res1}
\begin{array}{ccccccccc} 0 & \to & \bigoplus_{i=1}^{N+r-1} \O (-a_i) & \stackrel{\phi}{\to} & \bigoplus_{j=1}^N \O (-b_j) \oplus \G & \to & \I_X (a+h) & \to & 0 \\
& & || & & || & & & & \\
& & \E & \stackrel{\phi}{\to} & \F & & & & 
\end{array}
\end{equation}
also has a resolution of the form (\ref{Ntype}) because $\G$ is a quotient of $\N_0$ by a sum of line bundles, hence $X \in \L$. 
Since $\Delta^n h^0 (\O (-c+l))$ as a function of $l$ is a step function equal to $0$ for $l < c$ and 
$1$ for $l \geq c$, one can calculate the functions $\eta_X$ and $\theta_X$ in terms of the $a_i$ and $b_j$ appearing in resolution (\ref{res1}). 
In particular, the $a_i$ and $b_j$ determine the subsheaves $0 = \E_0 \subset \E_1 \dots \subset \E_n = \E$ 
and $0 = \F_0 \subset \F_1 \dots \subset \F_n = \F$ in the canonical filtration of Definition \ref{CF}. 
We have the following 
connection between $\theta_X$ and $\alpha_i = \rank \F_i - \rank \E_i$. 

\begin{prop}\label{equivalence}
$\theta_X$ is connected about $a+h \iff \alpha_i \geq 2$ for each $0<i<n$. 
\end{prop}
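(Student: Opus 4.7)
The plan is to derive an explicit combinatorial formula for $\eta_X$ from the data of the resolution (\ref{res1}) and the defining sequence (\ref{g}), then check that it lines up with the invariants $\alpha_i$ of the canonical filtration. Since $\Delta^n h^0(\O(l-c))$ is the indicator $[l \geq c]$ (writing $[P]$ for the truth value of a predicate $P$), the sequence defining $\G$ yields $\Delta^n h^0(\G(t)) = (r-1)[t \geq 0] + \Delta^n h^0(\I_{X_0}(a+t))$. Substituting into the resolution of $X$ and setting $l = a+h+t$ produces the master formula
\begin{equation*}
\eta_X(a+h+t) \;=\; \#\{j \neq M : b_j \leq t\} \;+\; (r-1)\,[t \geq 0] \;-\; \#\{i : a_i \leq t\}.
\end{equation*}
The hypothesis $s_0(X_0) = a$ is exactly what ensures the $\G$-contribution is centered at $l = a+h$.

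Next I will compare with the filtration ranks. At the $i$-th threshold $t_i := a_{r_i}$, Definition \ref{CF} gives $\rank \E_i = \#\{k : a_k \leq t_i\}$ and $\rank \F_i = \#\{j \neq M : b_j \leq t_i\} + r\,[t_i \geq 0]$: the factor $r$ appears precisely when $\G$ has been absorbed into $\F_i$, and a short argument shows this happens if and only if $t_i \geq 0$. The key combinatorial input is that no $b_j$ lies in $(a_{r_{i-1}+1}, a_{r_i}]$, so in particular $b_M = 0$ cannot be stranded between the $\F_i$-threshold $a_{r_{i-1}+1}$ and the $\E_i$-threshold $a_{r_i}$. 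Subtracting then yields the clean identity
\begin{equation*}
\alpha_i \;=\; \eta_X(a+h+t_i) \;+\; [t_i \geq 0].
\end{equation*}
Combined with $\theta_X(l) = \eta_X(l) - [\inf \eta_X \leq l < a+h]$, a case split on the sign of $t_i$ gives the pointwise equivalence $\alpha_i \geq 2 \iff \theta_X(a+h+t_i) \geq 1$.

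Finally I will promote these threshold inequalities to the full connectedness condition. The key structural claim is that on each interval $[t_{i-1}, t_i]$ between consecutive thresholds, the step function $\eta_X(a+h+\cdot)$ has a rise-then-fall shape: the $b$'s of step $i$, all of degree $\leq a_{r_{i-1}+1}$, contribute $+1$ jumps in the initial sub-interval $(t_{i-1}, a_{r_{i-1}+1}]$, after which only the $a$'s with $r_{i-1} < k \leq r_i$ remain, contributing $-1$ jumps in $(a_{r_{i-1}+1}, t_i]$. Since the fall sub-interval does not straddle zero, the $t < 0$ portion of $[t_{i-1}, t_i]$ consists only of rises while the $t \geq 0$ portion has its minimum at $t_i$; either way $\theta_X \geq \min(\theta_X(a+h+t_{i-1}), \theta_X(a+h+t_i)) \geq 1$ throughout $[t_{i-1}, t_i]$ when $\alpha_{i-1}, \alpha_i \geq 2$. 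Conversely, if $\theta_X$ is connected about $a+h$, each threshold $a+h+t_i$ lies in its support, so $\theta_X(a+h+t_i) \geq 1$ and hence $\alpha_i \geq 2$. The main obstacle is this rise-then-fall lemma, which requires careful tracking of the degrees at which the $b_j$- and $a_k$-jumps occur within each canonical filtration step, invoking the minimality of $r_i$ in Definition \ref{CF} to ensure that all $+1$ contributions are exhausted before the fall phase begins.
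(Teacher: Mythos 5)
Your proposal follows the same route as the paper's proof: the master formula you derive is exactly the paper's formula (\ref{eta}), your identity $\alpha_i = \eta_X(a+h+t_i) + 1$ when $t_i = a_{r_i} \geq 0$ and $\alpha_i = \eta_X(a+h+t_i)$ when $t_i < 0$ is precisely the paper's statement that the local minimum values of $\eta_X$ at the thresholds are $\alpha_i$ or $\alpha_i - 1$, and your rise-then-fall lemma (all $b$-jumps of step $i$, including the $(r-1)$-bump at $0$, occur in degrees $\leq a_{r_{i-1}+1}$, all $a$-jumps in degrees $\geq a_{r_{i-1}+1}$, with the fact that $\G \subset \F_i$ exactly when $a_{r_i} \geq 0$ keeping the bump in the rise phase) is the same shape analysis the paper uses. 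The rank identities $\rank \E_i = \#\{k: a_k \leq t_i\}$ and $\rank \F_i = \#\{j \neq M : b_j \leq t_i\} + r\,[t_i \geq 0]$, and the implication from $\alpha_{i-1}, \alpha_i \geq 2$ to positivity of $\theta_X$ on the intervening degrees, are correct and correctly attributed to the minimality in Definition \ref{CF}.

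The weak point is the converse direction, which you dispatch in one sentence: ``if $\theta_X$ is connected about $a+h$, each threshold $a+h+t_i$ lies in its support.'' That claim is not a formal consequence of connectedness --- connectedness constrains the shape of the support, not which specific degrees belong to it, and a priori a threshold could fall outside the support without creating a gap. What makes the claim work is one more application of the structure you have already established, and it is exactly where the paper's ``local minimum'' phrasing carries the load: if $t_i < 0$ and $\alpha_i \leq 1$, then $\eta_X$ must descend into its value $\alpha_i$ at $a+h+t_i$ from strictly larger values earlier in step $i$ (the $a$-jumps of that step occur at or before $t_i$ with no later $b$-jumps), so $\theta_X$ has support strictly below $a+h+t_i$ while $a+h$ lies above it, and the support together with $a+h$ is disconnected; if $t_i \geq 0$ and $\alpha_i \leq 1$, the $b$-jumps of step $i+1$ revive $\eta_X$ at some degree beyond $a+h+t_i$, again disconnecting. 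You should spell out this flanking argument (your rise-then-fall lemma supplies it); as written, one half of the equivalence is asserted rather than proved.
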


\begin{proof}
We relate the shape of the graph of the function $\eta_X$ and the $\alpha_i$ from the canonical filtration. 
From exact sequences (\ref{g}) and (\ref{res1}) and the definition of $\eta_X$, we see the formula
\begin{equation}\label{eta}
\eta_X (l) = \left\{\begin{array}{ll} \#\{j: b_j \leq l-a-h\} - \#\{i: a_i \leq l-a-h\} & l < a+h \\
\#\{j: b_j \leq l-a-h\} - \#\{i: a_i \leq l-a-h\}+(r-1) & l \geq a+h \end{array}\right.
\end{equation}

For simplicity, we examine the translated function $\tilde\eta(l) = \eta_X(l+a+h)$, which follows the same pattern, but with $0$ replacing $a+h$. This lines up better with the twists of $\G$ in the canonical filtration. 

With the notation of Definition \ref{CF}, the summands $\O$ and $\G$ do not appear in $\E_1, \F_1$ if $a_{r_1} < 0$. 
In constructing $\E_1 = \oplus_{i=1}^{r_1} \O (-a_i)$ and $\F_1 = \oplus_{j=1}^{m_1} \O(-b_j)$, since $b_{m_1}\leq a_1$ 
the function $\tilde\eta(l)$ increases by $1$ at $l=b_j$ for each summand $\O (-b_j)$ added to $\F_1$ and decreases by $1$ at $l=a_i$ for each 
summand $\O (-a_i)$ added to $\E_1$. 
Thus $\tilde\eta (l)=0$ for $l \ll 0$, $\tilde\eta$ is non-decreasing up to $l=b_{m_1}$ and then is non-increasing up to $l=a_{r_1}$, where the value is 
$\tilde\eta (a_{r_1}) = \alpha_1$. 
Similarly if $a_{r_2} < 0$, then $\tilde\eta$ increases at the new summands $\O (-b_j)$ added to $\F_2$ and decreases at the summands 
$\O (-a_i)$ added to $\E_2$, so $\tilde\eta$ increases, then decreases to the value 
$\tilde\eta (a_{r_2}) = \alpha_2$ and so on. We conclude that the $\alpha_i$ are the local minimum values of the function 
$\tilde\eta$ in the range $l < 0$.

Let $\E_k$ be the largest summand of $\E$ with terms $\O(-a_i)$ such that $a_i<0$. 
Hence $a_{r_k} <0$, $b_{m_k} <0$ and  $a_{r_k+1}\geq 0$. As before, in the range $[a_{r_k}, a_{r_k+1}-1]$, $\tilde\eta$ is non-decreasing  and is then non-increasing on the interval $[a_{r_k+1}-1, a_{r_{k+1}}]$.  
Since $a_{r_{k+1}} \geq 0$ and the bump in the value of $\tilde \eta$  is just $r-1$ and not the rank of $\G$ which now is a summand of $\F_{k+1}$, we see that $\tilde\eta(a_{r_{k+1}})$ equals $\alpha_{k+1}-1$.

The same is true for all higher $\tilde \eta (a_{r_i}), k<i<n$. In conclusion, we can translate back to $\eta_X$ and say that local minimum values of $\eta_X$ are achieved at $a_{r_i}+a+h, 1\leq i<n$ and $\eta_X(a_{r_i}+a+h) = \alpha_i$ if $a_{r_i}<0$ and $\eta_X(a_{r_i}+a+h) = \alpha_i -1$ if $a_{r_i}\geq 0$.

From the above interpretation, the condition $\alpha_i \geq 2$ is equivalent to the two conditions (a) for $l < a+h$, 
the local minimum values of $\eta_X$ are $\geq 2$, which says that if $\eta_X (l)$ reaches a value of at least two, it remains at least two until $l=a+h-1$ 
and (b) for $l \geq a+h$, the local minimum values of $\eta_X$ are $\geq 1$, which says that if $\eta_X (l)=0$ for some $l \geq a+h$, then it remains zero for larger $l$. 
Condition (a) is equivalent to $\theta_X$ being connected in degrees $< a+h$ and (b) is equivalent to $\theta_X$ being connected in degrees 
$\geq a+h$. Combining, we see that $\alpha_i \geq 2$ for each $i<n$ if and only if $\theta_X$ is connected about $a+h$.  
\end{proof}

\begin{ex}\label{explicitB}{\em The Horrocks-Mumford bundle $\G$ is a quotient of $\N_0$ and there is an exact sequence 
$0 \to \O \to \G \to \I_{X_0} (5) \to 0$, where $X_0$ is the Horrocks-Mumford surface (see Example \ref{HMeven}). 
Example \ref{explicitA} shows that $\alpha_i \geq 2$ for the canonical filtration associated to the vector bundles 
\[
\E = \O (3) \oplus \O^2 \oplus \O (-1) \oplus \O (-4)^2 \oplus \O (-5) \text{ and } \F = \O (5) \oplus \O (4)^2 \oplus \O (1)^2 \oplus \G \oplus \O (-3).
\] 
Theorem \ref{notgg} shows that if $\phi: \E \to \F$ is general, then $\cok \phi$ is $\I_X (5+23)$ for a smooth surface $X$. 
From the resolutions for $X_0$ and $X$, one can read off the function $\eta_X$ and $\theta_X$ is seen to be connected about $28$. 
\em}\end{ex}

For curves in $\mathbb P^3$ or surfaces in $\mathbb P^4$, we combine with Theorem \ref{notgg} to obtain the following smoothing theorem. 

\begin{thm}\label{toy} Assume $d=3$ or $d=4$ and that $\G$ satisfies Hypothesis \ref{setup} and \ref{setup2}. 
Then every integral $X \in \L$ is smoothable in $\L$.
\end{thm}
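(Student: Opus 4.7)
The plan is to combine the integrality criterion of \cite{Nint, Nfix}, Proposition \ref{equivalence}, and Theorem \ref{notgg} (together with the $\mathbb P^3$-analog recorded in Remark \ref{extension2} when $d=3$) to exhibit, for any given integral $X \in \L$, a smooth member of the same cohomology-preserving stratum $H_X$, which then witnesses the smoothability of $X$ inside $\L$.

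First I would place $X$ in a resolution of type (\ref{res1}) with $\F = \bigoplus \O(-b_j) \oplus \G$ and $\E = \bigoplus \O(-a_i)$. Starting from an $\N_0$-resolution (\ref{Ntype}) of $X$ and using the surjection $\N_0 \twoheadrightarrow \G$ from Hypothesis \ref{setup2} (whose kernel $K$ is a direct sum of line bundles), one splices to obtain a new resolution whose kernel is an extension of $K$ by $\mathcal P$; this extension splits through the vanishing $H^1(\O(t))=0$ on $\mathbb P^d$, yielding the required form with the same shift $a+h$. The vector bundles $\E, \F$ then determine the canonical filtration of Definition \ref{CF} and its invariants $\alpha_i$.

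Next, integrality of $X$ implies via \cite{Nint, Nfix} that $\theta_X$ is connected about $s_0(X_0)+h = a+h$. Proposition \ref{equivalence} translates this into the numerical condition $\alpha_i \geq 2$ for $0<i<n$, which is precisely the hypothesis of Theorem \ref{notgg} when $d=4$ (respectively, of its $\mathbb P^3$-version from Remark \ref{extension2} when $d=3$). Since $\G$ satisfies Hypothesis \ref{setup} by assumption, Theorem \ref{notgg} supplies a Zariski dense set of maps $\phi^\prime : \E \to \F$ whose cokernel has the form $\I_{X^\prime}(a+h)$ for a smooth codimension-two subscheme $X^\prime \subset \mathbb P^d$.

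Finally, both $X$ and $X^\prime$ arise from maps $\E \to \F$ with the same $\E$ and $\F$, so they share the same Hilbert function and lie in the same irreducible locally closed stratum $H_X$, every member of which belongs to $\L$. Since smoothness is an open condition on the Hilbert scheme and $X^\prime$ is a smooth point of $H_X$, the point $X$ lies in the closure of the smooth locus of $H_X$, which exhibits $X$ as smoothable within $\L$. The only nontrivial input is the alignment between the integrality invariant $\theta_X$ and the canonical-filtration invariants $\alpha_i$, supplied by Proposition \ref{equivalence}; once this is in place, the proof reduces to assembling the pieces, and I do not anticipate any further obstacle.
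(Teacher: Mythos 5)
Your overall route (integrality $\Rightarrow$ $\theta_X$ connected about $a+h$ $\Rightarrow$ $\alpha_i\geq 2$ via Proposition \ref{equivalence} $\Rightarrow$ a smooth member of the stratum via Theorem \ref{notgg}, then conclude inside the irreducible stratum $H_X$) is the paper's, but your first step has a genuine gap: you cannot, in general, convert the $\N_0$-type resolution (\ref{Ntype}) of $X$ itself into a resolution of type (\ref{res1}) by ``splicing'' through the surjection $\N_0 \twoheadrightarrow \G$. The splice only works in the opposite direction: given a $\G$-type resolution of some $Y$, composing $\bigoplus\O(-b_j)\oplus\N_0 \to \bigoplus\O(-b_j)\oplus\G \to \I_Y(a+h)$ produces a kernel which is an extension of $\bigoplus\O(-a_i)$ by the line-bundle kernel $\K$ of $\N_0\to\G$, and that extension splits by $H^1$-vanishing; this is exactly how the paper passes from (\ref{res1}) to (\ref{Ntype}). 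To go the way you need, the surjection $\pi:\N_0\oplus\Q\to\I_X(a+h)$ would have to factor through $\G\oplus\Q$, i.e.\ $\pi$ would have to annihilate $\K\subset\N_0$ (equivalently $\K\subseteq\mathcal P$ inside $\N_0\oplus\Q$); nothing forces this, and quotienting $\G\oplus\Q$ by the image of $\mathcal P$ yields $\I_X(a+h)/\pi(\K)$, not $\I_X(a+h)$. There is no extension ``of $K$ by $\mathcal P$'' in sight whose splitting would repair this: the obstruction is the factorization itself, not an $H^1$ class of line bundles. Note that $\G$ is not stably equivalent to $\N_0$, so Rao's correspondence only guarantees (\ref{Ntype}); that every $X\in\L$ admits a $\G$-type resolution is neither established nor needed.

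The paper sidesteps this exact point using the Lazarsfeld--Rao structure of $\L$ \cite{BBM, MDP1, Nlink}: there is $X_1\in H_X$ obtained from the minimal element $X_0$ by basic double links followed by a cohomology-preserving deformation to $X$. Basic double links visibly preserve the shape of (\ref{g}) up to adding line-bundle summands to both terms, so it is $X_1$ (not $X$) that carries a resolution of type (\ref{res1}), and $\theta_{X_1}=\theta_X$ is connected about $a+h$ since $X$ is integral \cite{Nint}. Applying Proposition \ref{equivalence} and Theorem \ref{notgg} (its $\mathbb P^3$ analogue from Remark \ref{extension2} when $d=3$) to this resolution of $X_1$ produces a smooth $X_2\in H_X$, and your final paragraph then goes through verbatim: it only requires $X$ and $X_2$ to lie in the common irreducible stratum $H_X\subset\L$ together with openness of smoothness, not a $\G$-resolution of $X$ itself. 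So replace your splicing step by this reduction to $X_1$; the remainder of your argument is correct and coincides with the paper's proof.
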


\begin{proof} 
Let $X_0$ be a minimal element of $\L$. 
If $X \in \L$ is integral, then $\theta_X$ is connected about $a+h$, where $a=s_0(X_0)$ and $h$ is the 
height of $X$ \cite[Theorem 3.4]{Nint}. 
By \cite{BBM, MDP1, Nlink}, there is a sequence of basic double linkages 
starting from $X_0$ to $X_1$ and a  cohomology preserving deformation from $X_1$ to $X$ through 
subschemes in $\L$, so that $X_1 \in H_X$. 
In particular, $\theta_X = \theta_{X_1}$.

Starting from (\ref{g}), we also find a resolution for $X_1$ of the form (\ref{res1}) using $\G$. 
Since $\theta_{X_1}$ is connected about $a+h$,  we can apply Proposition \ref{equivalence} to the 
canonical filtration of $\E, \F$ for this resolution of $X_1$ to find that $\alpha_i \geq 2$ for $i < n$. 
Hence Theorem \ref{notgg} shows that a general deformation of the map $\phi: \E \to \F$ yields 
$X_2$ smooth in $\L$ and the resolution for $X_2$ shows that $X_2 \in H_X$ as well, so that $X$ deforms 
with constant cohomology to $X_2$ through subschemes in $\L$. 
\end{proof}

\begin{rmk}{\em 
If $\G$ satisfies Hypotheses \ref{setup} and \ref{setup2}, then sequence (\ref{g}) assures that $X_0$ may be taken smooth by deforming  
$\phi: \O^{r-1} \to \G$ by Corollary \ref{P4} (or Lemma \ref{two} followed by Lemma \ref{one}), but $X_0$ may fail to be connected. 
This is seen in Example \ref{curve} (a). 
\em}\end{rmk}

\begin{ex}\label{curve}{\em
We give applications to space curves and compare with the literature. 

(a) If $\Omega = \Omega_{\mathbb P^3}$ is the sheaf of differentials on $\mathbb P^3$, then there is a sequence 
\[
0 \to \O^2 \to \Omega (2) \to \I_{X_0} (2) \to 0
\]
where $X_0$ is a pair of skew lines. 
A general quotient of $\G_1 = \Omega (2)$ by a section is a rank two bundle $\G_2$, a twisted null-correlation bundle 
with a sequence $0 \to \O \to \G_2 \to \I_{X_0} \to 0$ as above. 
Both $\G_1$ and $\G_2$ satisfy the hypotheses of Corollary \ref{toy}, so all integral curves 
in the even linkage class of two skew lines are smoothable. 

(b) The vector bundle $\G = \Omega (2)^{\oplus n}$ also satisfies the hypothesis of Corollary \ref{toy} and there is an exact sequence 
$0 \to \O^{3n-1} \to \G \to \I_{X_0} (2n) \to 0$ with $X_0$ a minimal arithmetically Buchsbaum curve, so every integral curve in 
the corresponding even linkage class $L_n$ is smoothable. Bolondi and Migliore classified the smooth curves in $L_n$ of 
maximal rank \cite{BM}. 

(c) More generally, an even linkage class $\L$ of an arithmetically Buchsbaum space curve corresponds to a vector bundle of the form 
$\oplus_{i=1}^q \Omega (a_i)$ \cite{C3}. Chang determines exactly which curves in $\L$ are smoothable \cite{C2}. 
Later Paxia and Ragusa confirmed that all integral curves in these even linkage classes are smoothable \cite{PR}. 

(d) Four general forms $f_i$ of degree $d$ define a rank three bundle $\tilde \Omega$ via 
\[
0 \to \tilde \Omega \to \O (-d)^4 \stackrel{(f_1, f_2, f_3, f_4)}{\to} \O \to 0.
\]
The bundle $\K = \oplus_{i=1}^r \tilde \Omega (2d)$ satisfies the hypotheses of Corollary \ref{toy}, 
hence all integral curves in the corresponding even linkage class are smoothable. This seems to be new. 

(e) If we use forms $f_i$ of different degrees in part (d), the results change. 
For the even linkage class $\L$ corresponding to the rank three bundle $\tilde \Omega$, 
Martin-Deschamps and Perrin determined all smoothable curves in $\L$ \cite{MDP2} and all the curves that deform to integral 
curves is also known \cite[$\S 6$]{Nfix}. It is rather uncommon that these answers agree. 
For example, if $\deg f_1 = \deg f_2 = 1$ and $\deg f_3 = \deg f_4 = 3$, the corresponding even linkage class has integral curves that are not smoothable in $\L$ \cite[$\S 6$]{Nfix}. 
Hartshorne showed that one family of these integral curves forms an irreducible component in the Hilbert scheme whose curves 
cannot even be smoothed in the full Hilbert scheme, much less in $\L$ \cite{H}. 
\em}\end{ex}

\begin{ex}\label{surface}{\em
Much less is known about surfaces in $\mathbb P^4$. 
Let $f_1, \dots, f_5$ be general degree $d$ forms and define $\tilde \Omega$ via 
\[
0 \to \tilde \Omega \to \O (-d)^5 \stackrel{(f_1,f_2,f_3,f_4,f_5)}{\longrightarrow} \O \to 0.
\]
Theorem \ref{toy} applies to the rank four bundle $\G = \tilde \Omega (2d)$ (as well as for any sum $\oplus \Omega (2d)$), 
hence every integral surface in the corresponding even linkage class is smoothable. The case $d=1$ recovers some results of Chang, who more generally determines which surfaces 
in an even linkage class $\L$ corresponding to $\oplus_{i=1}^q \Omega^{p_i} (a_i), p_i \in \{1,2\}$, are smoothable. 
She shows \cite{C2} that any integral arithmetically Buchsbaum surface is smoothable. Her proof can be copied for the case $d>1$, where $\tilde \Omega^{p_i}, p_i \in \{1,2\}$, will be the syzygy bundles in the resolution.
It is not known what conditions on $d_i = \deg f_i$ ensure that integral surfaces are smoothable when the $d_i$ are not constant, see Question \ref{Q2}. 
\em}\end{ex}

\begin{ex}\label{HMeven}{\em
Let $\F_{HM}$ be the much studied Horrocks-Mumford bundle on $\mathbb P^4$ \cite{HM}. It is known that $\F_{HM}$ has a $4$-dimensional space of sections $V = H^0 (\F_{HM})$ and that the general section $s \in V$ defines an abelian surface $X_{HM}$  of degree ten, the Horrocks-Mumford surface, via an exact sequence 
\begin{equation}\label{HMsurface}
0 \to \O \stackrel{s}{\to} \F_{HM} \to \I_{X_{HM}} (5) \to 0.
\end{equation}
The normalization $\F_{HM} (-3)$ is the homology of a self-dual monad 
\begin{equation}\label{HMmonad}
0 \to \O (-1)^5 \stackrel{A^\vee}{\to} \bigoplus_{i=1}^2 \Omega^2 (2) \stackrel{A}{\to} \O^5 \to 0 
\end{equation}
where $\Omega = \Omega^1_{\mathbb P^4}$ and $\Omega^2 = \wedge^2 \Omega$.

Let $K$ be the kernel of the map $\bigoplus_{i=1}^2 \Omega^2 (2) \stackrel{A}{\to} \O^5 \to 0$.  
Then $K$ is a vector bundle of rank $7$ and $H^3_*(K)=0$. Furthermore, $K$ has no line bundle summands, 
because a line bundle summand $\O(a)$ of $K$ induces an $\O(a)$ summand for $\bigoplus_{i=1}^2 \Omega^2 (2)$, 
but $\Omega^2$ is indecomposable, \cite[pp. 86--88]{OSS}. 
Hence $K(3)$ is the minimal (up to twist) bundle $\N_0$ for an even linkage class $\L$ of surfaces. 
Manolache \cite{M}  calculates the minimal generators of $H^0_*(K)$ and a minimal resolution 
\begin{equation}\label{Manolache}
 0 \to B \to  \O(2)^5 \oplus \O^4 \oplus \O (-1)^{15}  \to K(3) \to 0.
 \end{equation}
Here $H^1_*(B)=0$ and $B$ has no line bundle summands, so $B^\vee$ is the minimal bundle $\N_0^*$ for the odd linkage class $\L^*$ corresponding to $\L$.

Since $0 \to \O(2)^5 \to K(3) \to \F_{HM} \to 0$ is exact, we see that $X_{HM}$ has the resolution
\begin{equation}\label{Kres}
0 \to \O (2)^5 \oplus \O \to K (3) \to \I_{X_{HM}} (5) \to 0.
\end{equation} 

This shows that $\L$ is the even linkage class of $X_{HM}$ and that $X_{HM}=X_0$ is a minimal surface in $\L$. It also shows that $\F_{HM}$ successfully plays the role of $\G$ in Hypothesis \ref{setup2} with $r=4, a=5$.

Horrocks and Mumford show  that the evaluation map $V \otimes \O_{\mathbb P^4} \stackrel{ev}{\to}  \F_{HM}$ is surjective away from 
a smooth curve $C$ consisting of $25$ disjoint lines \cite[Theorem 5.1]{HM}. 
For a point $x \in C$, they find a local basis $e_1, e_2$ for $\F_{HM}$ and a basis $s, t, t^\prime, t^{\prime \prime}$ for the vector space $V$ 
such that the local matrix for the map $ev$  is $\left(\begin{array}{cccc} 1 & f & f^\prime & f^{\prime\prime} \\
0 & u & u^\prime & u^{\prime\prime} \end{array}\right)$, where $(u,u^\prime, u^{\prime\prime})$ generate the local ideal on $C$,  showing that  $\cok ev$ is locally $\O_C$, hence the cokernel of $V \otimes \O \stackrel {ev}{\to} \F_{HM}$ is a line bundle $L_C$ on the smooth curve $C$. 

Thus $(\F_{HM}, V)$ fit the requirements of Hypotheses \ref{setup2} and \ref{setup}. By Theorem \ref{toy},  every integral surface in $\L$ is smoothable. 
\em}
\end{ex}

\begin{ex}\label{HModd}{\em
Now we treat the dual class $\L^*$ for the Horrocks-Mumford surface. We have the long exact sequence
$$ 0 \to \ker ev \to V \otimes \O \to \F_{HM} \to L_C \to 0.$$

$\ker ev$ is a rank two reflexive sheaf, locally free away from $C$.  
Since $\Ext^i(L_C, \O) =0$ for $i = 0,1,2$, dualizing gives the sequence (with $\G = (\ker ev)^\vee$),
\begin{equation}\label{dualclass}
0 \to \F_{HM}^\vee \stackrel {ev^\vee}{\to} \O^4 \to \G \to 0.
\end{equation}

The Fitting ideal of the local matrix for ${ev^\vee}$ at a point $x\in C$ shows that $\sing \G $ is the scheme $C$. Hence $\G$ is a $\CD2$ reflexive rank two sheaf, generated by its global sections. 

From sequence (\ref{Manolache}), we also obtain the exact sequence
$$ 0 \to \B \to \O^4 \oplus \O (-1)^{15} \to \F_{HM} \to 0.$$
Comparing the dual of this sequence with sequence (\ref{dualclass}), we get the exact sequence
$$ 0 \to \O(1)^{15} \to \B^\vee \to \G \to 0.$$
This shows that any non-zero section of $\G$ yields a minimal surface $Y_0$ for the dual linkage class. Hence $\G$ satisfies the requirements of Hypotheses  \ref{setup2}, \ref{setup} and Theorem \ref{toy} applies to $\G$, showing 
that every integral surface in $\L^*$ is smoothable.
\em}
\end{ex}

\end{document}